\newtheorem{theorem}{Theorem}[section]
\newtheorem{definition}[theorem]{Definition}
\newtheorem{proposition}[theorem]{Proposition}
\newtheorem{example}[theorem]{Example}
\newtheorem{remark}[theorem]{Remark}
\def\<{\langle}
\def\>{\rangle}
\def\a{\alpha}
\def\b{\beta}
\def\c{\cdot}
\def\d{\delta}
\def\D{\Delta}
\def\g{\gamma}
\def\o{\otimes}
\def\r{\rho}
\def\tr{\triangleright}
\def\tl{\triangleleft}
\date{}
\begin{document}

\renewcommand{\baselinestretch}{1.2}
\renewcommand{\arraystretch}{1.0}

\title{A New approach to the construction of braided T-categories}

\date{}

\author {{ Daowei Lu\textsuperscript{1}\footnote {Email: ludaowei620@126.com}
 \quad Miman You\textsuperscript{2}\footnote{Corresponding author: youmiman@126.com}  
 }\\
  {\small 1. Department of Mathematics, Jining University,}\\
 {\small Qufu Shandong 273155, P. R. of China}\\
 {\small 2. School of Mathematics and Information Science,}\\
 {\small North China University of Resources of Electric Power,}\\
 {\small Henan Zhengzhou 450045, P. R. of China}\\}

 \maketitle
\begin{center}
\begin{minipage}{12.cm}

\noindent{\bf Abstract.} The aim of this paper is to construct a new braided $T$-category via the generalized Yetter-Drinfel'd modules
and Drinfel'd codouble over Hopf algebra, an approach different from that proposed by Panaite and Staic \cite{PS}.
Moreover, in the case of finite dimensional,  we will show that this category coincides with the corepresentation of
 a certain coquasitriangular Turaev group algebra that we construct.
Finally we apply our theory to the case of group algebra.
\\

\noindent{\bf Keywords:} Generalized Yetter-Drinfel'd module; Drinfel'd codouble; Braided $T$-category; Turaev group algebra.
\\

\noindent{\bf  Mathematics Subject Classification:} 16W30.
 \end{minipage}
 \end{center}
 \normalsize\vskip1cm

\section*{Introduction}

  Braided $T$-categories introduced by Turaev \cite{T2} are of interest due to their applications in homotopy quantum field theories,
 which are generalizations of ordinary topological quantum field theories.
 Braided $T$-category gives rise to 3-dimensional homotopy quantum field theory and plays a key role in the construction
 of Hennings-type invariants of flat group-bundles over complements of link in the 3-sphere, see \cite{V}.
 As such, they are interesting to different research communities in mathematical physics (see \cite{FY1989, K2004}).

 The quantum double of Drinfel'd \cite{Drinfeld1990} is one of the most celebrated Hopf constructions,
 which associates to a Hopf algebra $H$ a quasitriangular Hopf algebra $D(H)$.
 Unlike the Hopf algebra axioms themselves, the axioms of a dual quasitriangular (coquasitriangular) Hopf algebra are not self-dual.
 Thus the axioms and ways of working with these coquasitriangular Hopf algebras look somewhat different in practice
 and so it is surely worthwhile to study and write them out explicity in dual form.
Moreover, the corepresentation category of coquasitriangular Hopf algebras can give rise to a braided monoidal category
 which is different from one coming from the representation category of quasitriangular Hopf algebras.
 It is these ideals which many authors studied these notions (cf.\cite{CWZ1996,Doi1992},
 \cite{FM1994}, \cite{LT1991}, \cite{Lu1994,Majid1998,Mon},\cite{Radford1993},
 \cite{Wang1999b,Wang2001}).

 In \cite{PS}, the authors found a wise method to construct braided $T$-category $\mathcal{YD}(H)$ over the group $G=Aut_{Hopf}(H)\times Aut_{Hopf}(H)$,
 where $H$ is a Hopf algebra. This category $\mathcal{YD}(H)$ is the disjoint union of all these categories
  $_H\mathcal{YD}^H(\alpha,\beta)$(the categories of $(\alpha, \beta)$-Yetter-Drinfeld modules) over $H$ for all $\a,\b\in Aut_{Hopf}(H)$.
  The authors also proved that, if H is finite dimensional, then $\mathcal{YD}(H)$ coincides
 with the representations of a certain quasitriangular $T$-coalgebra $DT (H) $.
  Our motivation is the following: Can we use $(\alpha, \beta)$-Yetter-Drinfeld modules and Drinfel'd codouble to construct a new braid $T$-category?
 And in the case of $H$ being finite dimensional,
 can we prove that this new braid $T$-category is isomorphic to the corepresentation category of a certain coquasitriangular Turaev group algebra?

 In this paper, we give a positive answer to the above question. The paper is organized as follows:

 In section 1, we recall the notions of braided $T$-category,  Turaev group algebra and generalized Yetter-Drinfel'd modules.
 In section 2, we introduce the diagonal crossed coproduct $H^{*op}\bowtie C$, where $H$ is a Hopf algebra and $C$ is an $H$-bimodule coalgebra.
 In section 3, we firstly recall the definition of $(\a,\b)$-Yetter-Drinfel'd module, then we construct braided $T$-category $\widehat{\mathcal{YD}(H)}$
  over $G$ whose multiplication is $(\a,\b)*(\g,\d)=(\d\a\d^{-1}\g,\d\b)$ for all $\a,\b,\g,\d\in Aut_{Hopf}(H)$.
   We also prove that category $\widehat{\mathcal{YD}(H)}$ coincides with the corepresentation of a certain coquasitriangular
   crossed Turaev group algebra in the sense of \cite{W}.

\section{Preliminary}
\def\theequation{1.\arabic{equation}}
\setcounter{equation} {0} \hskip\parindent

Throughout this paper, let $k$ be a fixed field, and all vector spaces and tensor product are over $k$. All vector spaces are assumed to be finite dimensional, although it should be clear when this restriction is not necessary.

In this section we recall some basic definitions and results related to our paper.

\subsection{Crossed $T$-category}

Let $G$ be a group with the unit 1. Recall from \cite{LW} that a crossed category $\mathcal{C}$ (over $G$) is given by the following data:

$\bullet $ $\mathcal{C}$ is a strict monoidal category.

$\bullet $ A family of subcategory $\{\mathcal{C_\a}\}_{\a\in G}$ such that $\mathcal{C}$ is a disjonit union of this family and that $U\o V\in\mathcal{C}_{\a\b}$ for any $\a,\b\in G$, $U\in\mathcal{C}_\a$ and $V\in\mathcal{C}_\b$.

$\bullet $ A group homomorphism $\varphi:G\rightarrow aut(\mathcal{C}),\b\mapsto\varphi_{_\b}$, the $conjugation$, where $aut(\mathcal{C})$ is the group of the invertible strict tensor functors from $\mathcal{C}$ to itself, such that $\varphi_{_\b}(\mathcal{C_\a})=\mathcal{C}_{\b\a\b^{-1}}$ for any $\a,\b\in G$.

We will use the left index notation in Turaev: Given $\b\in G$ and an object $V\in\mathcal{C_\a}$, the functor $\varphi_{_\b}$ will be denoted by $^{\b}(\cdot)$ or $^{V}(\cdot)$ and $^{\b^{-1}}(\cdot)$ will be denoted by $^{\overline{V}}(\cdot)$. Since $^{V}(\cdot)$ is a functor, for any object $U\in\mathcal{C}$ and any composition of morphism $g\circ f$ in $\mathcal{C}$, we obtain $^{V}id_U=id_{^{V}U}$ and $^{V}(g\circ f)=\! ^{V}g\circ\! ^{V}f$. Since the conjugation $\varphi:\pi\rightarrow aut(\mathcal{C})$ is a group homomorphism, for any $V,W\in\mathcal{C}$, we have $^{V\o W}(\cdot)=\! ^{V}(^{W}(\cdot))$ and $^{1}(\cdot)=\! ^{V}(^{\overline{V}}(\cdot))=\! ^{\overline{V}}(^{V}(\cdot))=id_{\mathcal{C}}$. Since for any $V\in\mathcal{C}$, the functor $^{V}(\cdot)$ is strict, we have $^{V}(f\o g)=\! ^{V}f\o\! ^{V}g$ for any morphism $f$ and $g$ in $\mathcal{C}$, and $^{V}(1)=1.$

A Turaev braided $G$-category is a crossed $T$-category $\mathcal{C}$ endowed with a braiding, i.e., a family of isomorphisms
$$c=\{c_{_{U,V}}:U\o V\rightarrow~ ^{V}U\o V\}_{U,V\in\mathcal{C}}
$$
obeying the following conditions:

$\bullet $ For any morphism $f\in Hom_{\mathcal{C}_\a}(U,U')$ and $g\in Hom_{\mathcal{C}_\b}(V,V')$, we have
$$(^{\a}g\o f)\circ c_{_{U,V}}=c_{_{U',V'}}\circ(f\o g),
$$

$\bullet $ For all $U,V,W\in\mathcal{C}$, we have
\begin{align}
c_{_{U\o V,W}}&=(c_{_{U,^{V}W}}\o id_V)(id_U\o c_{_{V,W}}),\\
c_{_{U,V\o W}}&=(id_{^{U}V}\o c_{_{U,W}})(c_{_{U,V}}\o id_{W}).
\end{align}

$\bullet $ For any $U,V\in\mathcal{C}$ and $\a\in G$, $\varphi_{_\a}(c_{_{U,V}})=c_{_{^{\a}U,^{\a}V}}$.

\subsection{Turaev Group Algebras}

Let $G$ be a group with unit 1. Recall from \cite{T,W} that a $G$-algebra is a family $A=\{A_\a\}_{\a\in G}$ of $k$-spaces together with a family of $k$-linear maps $m=\{m_{\a,\b}:A_\a\o A_\b\rightarrow A_{\a\b}\}_{\a,\b\in G}$ (called multiplication) and a $k$-linear map $\eta:k\rightarrow A_1$ (called unit) such that $m$ is associative in the sense that, for all $\a,\b,\g\in G$
\begin{eqnarray*}
&&m_{\a\b,\g}(m_{\a,\b}\o id)=m_{\a,\b\g}(id\o m_{\b,\g}),\\
&&m_{\a,1}(id\o\eta)=id=m_{1,\a}(\eta \o id).
\end{eqnarray*}
A Turaev $G$-algebra is a $G$-algebra $H=\{H_\a\}_{\a\in G}$ such that each $H_\a$ is a coalgebra with comultiplication $\Delta_\a$ and counit $\varepsilon_\a$; the map $\eta:k\rightarrow H_1$ and the maps $m_{\a,\b}:H_\a\o H_\b\rightarrow H_{\a\b}$ are coalgebra maps, with a family of $k$-linear maps $S=\{S_\a:H_\a\rightarrow H_{\a^{-1}}\}_{\a\in G}$ (called the antipode) such that for all $\a\in G$
$$m_{\a,\a^{-1}}(id\o S_\a)\Delta_\a=\varepsilon_\a 1=m_{\a^{-1},\a}(S_\a \o id)\Delta_\a.$$
Furthermore, a crossed Turaev $G$-algebra is a Turaev $G$-algebra with a family of
coalgebra isomorphisms $\psi=\{\psi_\b:H_\a\rightarrow H_{\b\a\b^{-1}}\}_{\b\in G}$ (called crossing), satisfying
the following conditions: for all $\a,\b,\g\in G$
\begin{itemize}
  \item [(i)] $\psi$ is multiplicative, i.e., $\psi_\a\psi_\b=\psi_{\a\b}$,
  \item [(ii)] $\psi$ is compatible with $m$, i.e., $m_{\g\a\g^{-1},\g\b\g^{-1}}(\psi_\g\o\psi_\g)=\psi_\g m_{\a,\b}$,
  \item [(iii)] $\psi$ is compatible with $\eta$, i.e., $\eta=\psi_\g\eta$,
  \item [(iv)] $\psi$ preserves the antipode, i.e., $\psi_\b S_\a=S_{\b\a\b^{-1}}\psi_\b$.
\end{itemize}

We use the Sweedler¡¯s notation for a comultiplication $\Delta_\a$ on $H_\a$: for all $h\in H_\a$
$$\Delta_\a(h)=h_1\o h_2.$$

Recall from \cite{W}, a Turaev $G$-algebra $H$ is called coquasitriangular if there exists a family of $k$-linear maps $\sigma=\{\sigma_{\a,\b}:H_\a\o H_\b\rightarrow k\}$ such that $\sigma_{\a,\b}$ is convolution invertible for all $\a,\b\in G$ and the following conditions are satisfied:
\begin{itemize}
  \item [(TCT1)] $\sigma_{\a\b,\g}(xy,z)=\sigma_{\a,\g}(x,z_2)\sigma_{\b,\g}(y,z_1)$,
  \item [(TCT2)]  $\sigma_{\a,\b\g}(x,yz)=\sigma_{\a,\b}(x_1,y)\sigma_{\b^{-1}\a\b,\g}(\psi_{\b^{-1}}(x_2),z)$,
  \item [(TCT3)]  $\sigma_{\a,\b}(x_1,y_1)y_2\psi_{\b^{-1}}(x_2)=x_1y_1\sigma_{\a,\b}(x_2,y_2)$,
  \item [(TCT4)]  $\sigma_{\a,\b}(x,y)=\sigma_{\g\a\g^{-1},\g\b\g^{-1}}(\psi_\g(x),\psi_\g(y))$.
\end{itemize}
for all $x\in H_\a,y\in H_\b,z\in H_\g$.

Note that if Turaev $G$-algebra $H$ is coquasitriangular, then $(H_1,\sigma_{1,1})$ is a coquasitriangular Hopf algebra.

\subsection{Yetter-Drinfel'd module}

Let $C$ be an $H$-bimodule coalgebra, with module structures $H\o C\rightarrow C,\ h\o c\mapsto h\c c$ and $C\o H\rightarrow C,\ c\o h\mapsto c\c h$. Recall from \cite{CMZ}, we can consider the Yetter-Drinfel'd datum $(H,C,H)$ and the Yetter-Drinfel'd category $_H\mathcal{YD}^C$, whose object $M$ is a left $H$-module (with the action $h\o m\mapsto h\c m$) and right $C$-comodule (with the coaction $m\mapsto m_{(0)}\o m_{(1)}$) such that for all $h\in H,m\in M$,
$$h_1\c m_{(0)}\o h_2\c m_{(1)}=(h_2\c m)_{(0)}\o(h_2\c m)_{(1)}\c h_1,$$
or equivalently
$$(h\c m)_{(0)}\o (h\c m)_{(1)}=h_2\c m_{(0)}\o h_3\c m_{(1)}\c S^{-1}(h_1).$$

\section{Diagonal crossed coproduct}
\def\theequation{2.\arabic{equation}}
\setcounter{equation} {0} 

As the dual of diagonal crossed product (for details, see \cite{PS}), we have the following result.

\begin{proposition}
Let $H$ be a Hopf algebra with a bijective antipode $S$, and $C$ a bimodule coalgebra with the actions $H\o C\rightarrow C,\ h\o c\mapsto h\c c$ and $C\o H\rightarrow C,\ c\o h\mapsto c\c h$. Then we have a coalgebra $H^{*op}\o C$ (denoted by $H^{*op}\bowtie C$) with the comultiplication and counit
\begin{eqnarray}
&&\bar{\Delta}(p\bowtie c)=\sum_{i,j}p_1\bowtie h_j\c c_1\c S^{-1}(h_i)\o h^ip_2h^j\bowtie c_2,\\
&&\bar{\varepsilon}(p\bowtie c)=p(1)\varepsilon(c),
\end{eqnarray}
for all $p\in H^{*op},c\in C$, where $\{h_i\}$ and $\{h^i\}$ are basis and dual basis of $H$. $H^{*op}\bowtie C$ is called diagonal crossed coproduct.
\end{proposition}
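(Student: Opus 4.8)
The plan is to verify directly that $(\bar\Delta,\bar\varepsilon)$ satisfy the counit and coassociativity axioms; this is the term-by-term dual of the verification in \cite{PS} that the diagonal crossed product is an associative unital algebra. In fact, in the finite-dimensional setting the linear dual of $H^{*op}\bowtie C$ may be identified with a diagonal crossed product built from the $H$-bimodule algebra $C^*$ (the dual of the $H$-bimodule coalgebra $C$), so in principle one could simply invoke \cite{PS} and dualize; I will instead give the self-contained computation, since the dualization requires pinning down several antipode and order conventions. Throughout I would use the dual-basis identities
\[
\sum_i\langle h^i,h\rangle h_i=h,\qquad \sum_i\varepsilon(h_i)h^i=\varepsilon_H=1_{H^{*}},\qquad \sum_i\langle h^i,1\rangle h_i=1,
\]
together with the two resolution formulas
\[
\sum_i\Delta(h_i)\otimes h^i=\sum_{i,j}h_i\otimes h_j\otimes(h^i h^j),\qquad \sum_i h_i\otimes\Delta_{H^{*}}(h^i)=\sum_{i,j}(h_i h_j)\otimes h^i\otimes h^j,
\]
which convert a coproduct of a basis vector into a convolution product of dual-basis vectors and vice versa. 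I will also use that each action is a coalgebra map, i.e. $\Delta_C(h\cdot c\cdot h')=(h_1\cdot c_1\cdot h'_1)\otimes(h_2\cdot c_2\cdot h'_2)$ and $\varepsilon_C(h\cdot c\cdot h')=\varepsilon(h)\varepsilon_C(c)\varepsilon(h')$, that $\Delta_{H^{*}}$ is multiplicative on $H^{*op}$, and that $S^{-1}$ is both anti-multiplicative and anti-comultiplicative.

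First I would dispose of the counit axiom. Applying $\bar\varepsilon$ in the first leg of $\bar\Delta(p\bowtie c)$ gives $\sum_{i,j}p_1(1)\,\varepsilon(h_j\cdot c_1\cdot S^{-1}(h_i))\,(h^i p_2 h^j\bowtie c_2)$; the counit compatibility of the actions collapses the $C$-factor to $\varepsilon(h_i)\varepsilon(h_j)\varepsilon_C(c_1)$, after which $\sum_i\varepsilon(h_i)h^i=1_{H^{*}}$ (applied for both $i$ and $j$) together with $\sum p_1(1)p_2=p$ and $\sum\varepsilon_C(c_1)c_2=c$ returns $p\bowtie c$. The right-hand counit is symmetric, now using $\sum_i\langle h^i,1\rangle h_i=1_H$ and $\sum_i\langle h^i,1\rangle S^{-1}(h_i)=1_H$ to kill the basis insertions. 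This part is routine.

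The substance is coassociativity. I would expand $(\bar\Delta\otimes\mathrm{id})\bar\Delta(p\bowtie c)$ by applying $\bar\Delta$ to the first leg, using the bimodule-coalgebra rule to split
\[
\Delta_C\big(h_j\cdot c_1\cdot S^{-1}(h_i)\big)=\big(h_{j1}\cdot c_1\cdot S^{-1}(h_{i2})\big)\otimes\big(h_{j2}\cdot c_2\cdot S^{-1}(h_{i1})\big),
\]
where the index reversal comes from anti-comultiplicativity of $S^{-1}$; symmetrically I would expand $(\mathrm{id}\otimes\bar\Delta)\bar\Delta(p\bowtie c)$ by applying $\bar\Delta$ to the second leg, where $\Delta_{H^{*}}(h^i p_2 h^j)=(h^i_1 p_2 h^j_1)\otimes(h^i_2 p_3 h^j_2)$. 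Both sides then carry four independent dual-basis summations, and the two expressions are matched by repeatedly applying the resolution formulas to trade the coproducts $h_{i1}\otimes h_{i2}$ and $h_{j1}\otimes h_{j2}$ occurring on the left for the products of dual-basis vectors occurring on the right (merging the auxiliary $k,l$ sums along the way), while invoking anti-multiplicativity $S^{-1}(h_k h_{i2})=S^{-1}(h_{i2})S^{-1}(h_k)$ and coassociativity of $\Delta_C$ and of $\Delta_{H^{*}}$.

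The main obstacle is precisely this bookkeeping: keeping the four dual-basis sums aligned while simultaneously propagating the order reversal induced by $S^{-1}$ through the $H$-actions on $C$, and respecting that the products $h^i p_2 h^j$ are formed in $H^{*op}$, so that the coproduct components recombine in the opposite order to the naive one. My expectation is that, once the computation is organized so that each of the $i$- and $j$-summations is resolved by exactly one application of a translation formula, both sides reduce to the same expression indexed by three copies each of the bases $\{h_i\}$ and $\{h^i\}$, establishing $(\bar\Delta\otimes\mathrm{id})\bar\Delta=(\mathrm{id}\otimes\bar\Delta)\bar\Delta$ and hence that $H^{*op}\bowtie C$ is a coalgebra.
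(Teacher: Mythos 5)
Your proposal is correct and follows essentially the same route as the paper: a direct verification of the counit axiom and of coassociativity, expanding both sides of $(\bar{\Delta}\otimes\mathrm{id})\bar{\Delta}=(\mathrm{id}\otimes\bar{\Delta})\bar{\Delta}$ via the bimodule-coalgebra axiom and the anti-comultiplicativity of $S^{-1}$. The only difference is bookkeeping: the paper eliminates the dual-basis sums by evaluating the $H^{*}$-legs at arbitrary test elements $h,h',h''$, whereas you resolve them with the translation formulas $\sum_i\Delta(h_i)\otimes h^i=\sum_{i,j}h_i\otimes h_j\otimes h^ih^j$ and its dual; both devices lead to the same matching.
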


\begin{proof}
For all $p\in H^{*op},c\in C$, on one hand
\begin{align*}
&(\bar{\D}\o id)\bar{\D}(p\bowtie c)\\
&=\sum_{i,j}\bar{\D}(p_1\bowtie h_j\c c_1\c S^{-1}(h_i))\o h^ip_2h^j\bowtie c_2\\
&=\sum_{i,j,s,t}p_1\bowtie h_s\c(h_j\c c_1\c S^{-1}(h_i))_1S^{-1}(h_t)\o h^tp_2h^s\bowtie (h_j\c c_1\c S^{-1}(h_i))_2 \o h^ip_3h^j\bowtie c_2\\
&=\sum_{i,j,s,t}p_1\bowtie h_sh_{j1}\c c_1\c S^{-1}(h_t h_{i2})\o h^tp_2h^s\bowtie h_{j2}\c c_2\c S^{-1}(h_{i1}) \o h^ip_3h^j\bowtie c_3.
\end{align*}
Evaluating the first, the third and the fifth factors at $h,h',h''\in H$ respectively, we have
\begin{align*}
&\sum_{i,j,s,t}p_1(h)h_sh_{j1}\c c_1\c S^{-1}(h_t h_{i2})\o h^tp_2h^s(h')h_{j2}\c c_2\c S^{-1}(h_{i1}) \o h^ip_3h^j(h'') c_3\\
&=p_1(h)h'_3h''_{4}\c c_1\c S^{-1}(h'_1 h''_{2})\o p_2(h'_2)h''_{5}\c c_2\c S^{-1}(h''_{1}) \o p_3(h''_3) c_3\\
&=p(hh'_2h''_3)h'_3h''_{4}\c c_1\c S^{-1}(h'_1 h''_{2})\o h''_{5}\c c_2\c S^{-1}(h''_{1}) \o c_3.
\end{align*}
On the other hand
\begin{align*}
&(id\o\bar{\D})\bar{\D}(p\bowtie c)\\
&=\sum_{i,j}p_1\bowtie h_j\c c_1\c S^{-1}(h_i)\o \bar{\D}(h^ip_2h^j\bowtie c_2)\\
&=\sum_{i,j,s,t}p_1\bowtie h_j\c c_1\c S^{-1}(h_i)\o h^i_1p_2h^j_1\bowtie h_s\c c_2\c S^{-1}(h_t)\o h^th^i_2p_3h^j_2h^s\bowtie c_3.
\end{align*}
Evaluating the first, the third and the fifth factors at $h,h',h''\in H$ respectively, we have
\begin{align*}
&\sum_{i,j,s,t}p_1(h)h_j\c c_1\c S^{-1}(h_i)\o h^i_1p_2h^j_1(h')h_s\c c_2\c S^{-1}(h_t)\o h^th^i_2p_3h^j_2h^s(h'')c_3\\
&=\sum_{i,j}p_1(h)h_j\c c_1\c S^{-1}(h_i)\o h^i_1(h'_1)p_2(h'_2)h^j_1(h'_3)h''_5\c c_2\c S^{-1}(h''_1)\o h^i_2(h''_2)p_3(h''_3)h^j_2(h''_4)c_3\\
&=\sum_{i,j}p_1(hh'_2h''_3)h'_3 h''_4\c c_1\c S^{-1}(h'_1h''_2)\o h''_5\c c_2\c S^{-1}(h''_1)\o c_3.
\end{align*}
 Thus $\bar{\D}$ is coassociative. Easy to check that $\bar{\varepsilon}$ is counit. The proof is completed.
 \end{proof}

 \begin{remark}
 In particular when $C=H$ and the module action is multiplication, we can recover the Drinfel'd codouble $\widehat{D(H)}$ introduced in \cite[Proposition 10.3.14]{Mon}.
 \end{remark}

 \begin{proposition}
 Diagonal crossed coproduct $H^{*op}\bowtie C$ is a $\widehat{D(H)}$-bimodule coalgebra with structures
 \begin{eqnarray}
 &&\widehat{D(H)}\o H^{*op}\bowtie C\rightarrow H^{*op}\bowtie C,\ (p\o h)\tr(q\bowtie c)=qp\bowtie h\c c,\\
 &&H^{*op}\bowtie C\o\widehat{D(H)}\rightarrow H^{*op}\bowtie C,\ (q\bowtie c)\tl(p\o h)=pq\bowtie c\c h,
 \end{eqnarray}
 for all $p,q\in H^{*op},h\in H,c\in C$.
 \end{proposition}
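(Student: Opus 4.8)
The plan is to check the four defining properties of a bimodule coalgebra over the bialgebra $\widehat{D(H)}$. Recall from Remark 2.2 and \cite{Mon} that the underlying coalgebra of $\widehat{D(H)}$ is the codouble $H^{*op}\bowtie H$ (the case $C=H$ of Proposition 2.1), that its product is the tensor-product algebra structure on $H^{*}\o H$ (so that $(p\o 1)(\v\o h)=p\o h$ and the given maps are genuine actions), and that its unit is $\v\o 1$. Writing juxtaposition for the product of $H^{*op}$, I must verify: (i) $\tr$ and $\tl$ are unital associative actions; (ii) they commute, so $H^{*op}\bowtie C$ becomes a $\widehat{D(H)}$-bimodule; (iii) $\bar{\v}$ is a bimodule map; (iv) $\bar{\Delta}$ is a bimodule map. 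Properties (i)--(iii) are routine, whereas (iv) is the substance of the statement.

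For (i), associativity follows from associativity of the left $H$-action on $C$ together with the algebra structure of $\widehat{D(H)}$: one computes $(p\o h)\tr\big((p'\o h')\tr(q\bowtie c)\big)=qp'p\bowtie(hh')\c c=\big((p\o h)(p'\o h')\big)\tr(q\bowtie c)$, and $(\v\o 1)\tr(q\bowtie c)=q\bowtie c$; the same computation on the right settles $\tl$. For (ii), since the left and right $H$-actions on the bimodule coalgebra $C$ commute, $\big((p\o h)\tr(q\bowtie c)\big)\tl(p'\o h')=p'qp\bowtie h\c c\c h'=(p\o h)\tr\big((q\bowtie c)\tl(p'\o h')\big)$. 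For (iii), using that $\v_C$ is a morphism of $H$-modules and that evaluation at $1$ is an algebra map on $H^{*}$, $\bar{\v}\big((p\o h)\tr(q\bowtie c)\big)=(qp)(1)\v_C(h\c c)=p(1)q(1)\v_H(h)\v_C(c)=\v_{\widehat{D(H)}}(p\o h)\,\bar{\v}(q\bowtie c)$, and likewise on the right.

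For (iv) I must show $\bar{\Delta}\big((p\o h)\tr(q\bowtie c)\big)=\big((p\o h)_1\tr(q\bowtie c)_1\big)\o\big((p\o h)_2\tr(q\bowtie c)_2\big)$ and its right-handed analogue. Since $\bar{\Delta}$-compatibility with a product of acting elements follows from compatibility with each factor, and $\widehat{D(H)}$ is generated as an algebra by the elements $\v\o h$ and $p\o 1$, it suffices to treat these two families. For $\v\o h$ one expands the left side by Proposition 2.1, using crucially that $C$ is an $H$-\emph{bimodule coalgebra} so that $\Delta_C(h\c c)=h_1\c c_1\o h_2\c c_2$; expanding the right side with the crossed coproduct of $\v\o h$ and the action $\tr$ reduces the claim (with $q_1$ and $h_2\c c_2$ common to both sides) to the identity
\begin{align*}
&\s_{i,j}(h_j h_1)\c c_1\c S^{-1}(h_i)\o(h^i\,q_2\,h^j)\\
&\qquad=\s_{i,j,m,n}(h_n h_1 S^{-1}(h_m)h_j)\c c_1\c S^{-1}(h_i)\o(h^i\,q_2\,h^j\,h^m\,h^n),
\end{align*}
where products in the second tensor leg are taken in $H^{*op}$. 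The family $p\o 1$ is entirely analogous, with the left $H$-action on $C$ replaced by right multiplication on the $H^{*op}$-leg. As in the proof of Proposition 2.1, I would establish such identities by evaluating the $H^{*op}$-valued slots against test elements of $H$ and collapsing the dual-basis sums through the reconstruction rule $\s_i\<h^i,a\>h_i=a$.

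The delicate point is the collapse of the two extra dual-basis summations (over $m,n$) carried by the crossed coproduct of $\widehat{D(H)}$. After evaluation these produce terms of the form $(y_1 h_1 S^{-1}(y_2)y_3)\c c_1\c S^{-1}(y_4)$, which must be reduced to the cleaner $(y_1 h_1)\c c_1\c S^{-1}(y_2)$ coming from the left side. This reduction is exactly the Hopf-algebra identity $\s_{(y)}S^{-1}(y_1)y_2\o y_3=1\o y$ (a direct consequence of the antipode axioms and the bijectivity of $S$), applied to the inner components of $y$. Tracking how this interlocks with the two-sided $H$-action on $c_1$ and the convolution products on the $H^{*op}$-leg is the main bookkeeping obstacle; it is the dual counterpart of the antipode cancellations $\s h_1 S^{-1}(h_2)=\v(h)1$ that drive the coassociativity computation in Proposition 2.1. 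Once both families are checked, the right-handed compatibility of $\bar{\Delta}$ follows by the mirror-image argument, completing the proof that $H^{*op}\bowtie C$ is a $\widehat{D(H)}$-bimodule coalgebra.
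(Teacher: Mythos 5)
Your proposal is sound and in fact broader in scope than the paper's own proof, which dismisses the module axioms with ``Obviously $H^{*op}\bowtie C$ is a left $\widehat{D(H)}$-module'' and only writes out the left module--coalgebra compatibility, leaving the right-handed case to ``similarly''. Where you genuinely diverge is in organizing that compatibility check: the paper verifies $\bar{\Delta}\big((p\o h)\tr(q\bowtie c)\big)=(p\o h)_1\tr(q\bowtie c)_1\o(p\o h)_2\tr(q\bowtie c)_2$ directly for a general element $p\o h$ in a single four-line computation, inserting the dual-basis sums of $\bar{\Delta}_{\widehat{D(H)}}(p\o h)$ and collapsing them by evaluating the $H^{*}$-legs on test elements, whereas you reduce to the generating families $p\o 1$ and $\v\o h$. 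Your reduction is legitimate, but it costs an extra input the paper's direct computation does not need: that $\bar{\Delta}$ on $\widehat{D(H)}$ is multiplicative for the tensor-product algebra structure (i.e.\ that $\widehat{D(H)}$ is a bialgebra, Montgomery's Proposition 10.3.14), since compatibility for a product of acting elements only follows from compatibility for the factors via $\bar\Delta(xy)=\bar\Delta(x)\bar\Delta(y)$. The identity you isolate for the family $\v\o h$ is correct and does collapse as you describe; the paper's computation is exactly the same collapse performed for general $p\o h$. One detail to repair before writing it out: the cancellation that removes the extra dual-basis sums is $\s_{(y)}S^{-1}(y_2)y_1\o y_3=1\o y$ (equivalently $\s y_2S^{-1}(y_1)=\v(y)1$), not $\s_{(y)}S^{-1}(y_1)y_2\o y_3=1\o y$ as you quote --- the latter fails unless $H$ is cocommutative. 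In your displayed identity the factor $S^{-1}(h_m)h_j$ is paired with $h^j$ standing to the left of $h^m$ in the convolution product, so after evaluation it has the form $S^{-1}(a_{k+1})a_k$ and does collapse; you just need to invoke the correctly handed antipode identity when you do so.
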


 \begin{proof}
 Obviously $H^{*op}\bowtie C$ is a left $\widehat{D(H)}$-module. And for all $p,q\in H^{*op},h\in H,c\in C$,
 \begin{align*}
 \bar{\Delta}((p\o h)\tr(q\bowtie c))&=\bar{\Delta}(qp\bowtie h\c c)\\
                                    &=\sum_{i,j}q_1p_1\bowtie h_j\c (h\c c)_1\c S^{-1}(h_i)\o h^iq_2p_2h^j\bowtie (h\c c)_2\\
                                    &=\sum_{i,j}q_1p_1\bowtie h_j h_1\c c_1\c S^{-1}(h_i)\o h^iq_2p_2h^j\bowtie h_2\c c_2\\
                                    &=\sum_{i,j}q_1p_1\bowtie h_i h_1S^{-1}(h_j)h_s\c c_1\c S^{-1}(h_t)\o h^tq_2h^sh^j p_2h^i\bowtie h_2\c c_2\\
                                    &=(p\o h)_1\tr(q\bowtie c)_1\o(p\o h)_2\tr(q\bowtie c)_2.
 \end{align*}
 Thus $H^{*op}\bowtie C$ is a left $\widehat{D(H)}$-module coalgebra. Similarly one can check that $H^{*op}\bowtie C$ is also
 a right $\widehat{D(H)}$-module coalgebra. The proof is completed.
\end{proof}

\section{The construction of braided $T$-category $\widehat{\mathcal{YD}(H)}$}
\def\theequation{3.\arabic{equation}}
\setcounter{equation} {0} 

\begin{definition}\cite[Definition 2.1]{PS}
Let $H$ be a Hopf algebra and $\a,\b\in Aut_{Hopf}(H)$. An $(\a,\b)$-Yetter-Drinfel'd module over $H$ is a vector space $M$ such that $M$ is a left $H$-module and right $H$-comodule with the following compatible condition
$$h_1\c m_{(0)}\o\b(h_2)m_{(1)}=(h_2\c m)_{(0)}\o (h_2\c m)_{(1)}\a(h_1),$$
for all $h\in H,m\in M$. We denote by $_H\mathcal{YD}^H(\a,\b)$ the category of $(\a,\b)$-Yetter-Drinfel'd modules, morphisms being the $H$-linear and $H$-colinear.
\end{definition}

\begin{example}
For any Hopf algebra $H$ and $\a,\b\in Aut_{Hopf}(H)$, define $H_{\a,\b}$ as follows: $H_{\a,\b}=H$ with regular left $H$-module structure and right $H$-comodule structure given by
$$\r(h)=h_2\o\b(h_3)S^{-1}\a(h_1),$$
for all $h\in H$. Then $H_{\a,\b}\in\! _H\mathcal{YD}^H(\a,\b)$.
\end{example}

Let $\a,\b\in Aut_{Hopf}(H)$. We define an $H$-bimodule coalgebra $H(\a,\b)$ as follows: $H(\a,\b)=H$ as coalgebra with module structures
\begin{eqnarray*}
&&H\o H(\a,\b)\rightarrow H(\a,\b),\quad h\o h'\mapsto \b(h)h',\\
&&H(\a,\b)\o H\rightarrow H(\a,\b),\quad h'\o h\mapsto h'\a(h),
\end{eqnarray*}
for all $h,h'\in H$.

Now consider the Yetter-Drinfel'd datum $(H,H(\a,\b),H)$ and its Yetter-Drinfel'd category $_H\mathcal{YD}^{H(\a,\b)}$.

\begin{proposition} With the above notations, we have the relation:
$$_H\mathcal{YD}^{H(\a,\b)}=\! _H\mathcal{YD}^H(\a,\b).$$
\end{proposition}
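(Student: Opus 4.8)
The plan is to prove the equality of the two categories by checking that they have literally the same objects and the same morphisms. The crucial point is that $H(\a,\b)$ is just $H$ as a coalgebra; only its bimodule structure has been twisted by $\a$ and $\b$. Since the notion of a right comodule depends only on the coalgebra structure of the coacting object, a right $H(\a,\b)$-comodule is exactly a right $H$-comodule. Therefore an object of either $_H\mathcal{YD}^{H(\a,\b)}$ or $_H\mathcal{YD}^H(\a,\b)$ is nothing but a vector space $M$ that is simultaneously a left $H$-module and a right $H$-comodule, and the whole problem reduces to comparing the two compatibility conditions.

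First I would write out the general Yetter-Drinfel'd compatibility from Section 1.3, applied to the datum $(H,H(\a,\b),H)$: an object $M$ of $_H\mathcal{YD}^{H(\a,\b)}$ is required to satisfy
$$h_1\c m_{(0)}\o h_2\c m_{(1)}=(h_2\c m)_{(0)}\o(h_2\c m)_{(1)}\c h_1,$$
where the action on the comodule leg $m_{(1)}$ is the left bimodule action of $C=H(\a,\b)$ and the action on $(h_2\c m)_{(1)}$ is the right bimodule action of $C$. Next I would substitute the explicit bimodule structure of $H(\a,\b)$, namely $h\c c=\b(h)c$ on the left and $c\c h=c\a(h)$ on the right. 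Carrying out this substitution converts the displayed identity into
$$h_1\c m_{(0)}\o\b(h_2)m_{(1)}=(h_2\c m)_{(0)}\o(h_2\c m)_{(1)}\a(h_1),$$
which is precisely the defining condition of $_H\mathcal{YD}^H(\a,\b)$ in Definition 3.1. This shows that the objects of the two categories coincide. For the morphisms, I would observe that in each category a morphism is by definition $H$-linear and colinear over the relevant comodule coalgebra; since that coalgebra is $H$ in both cases, the two Hom-sets agree as well, and the categories are equal.

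I do not expect any serious obstacle here: the entire content is bookkeeping of which of the two twisted bimodule actions of $H(\a,\b)$ is applied to which comodule leg. The only point that requires care is to match the left action with $m_{(1)}$ and the right action with $(h_2\c m)_{(1)}$, in accordance with the general compatibility condition, and to keep firmly in mind that the coalgebra underlying $H(\a,\b)$ is just $H$, so that comodules and colinear maps transport between the two descriptions without any change.
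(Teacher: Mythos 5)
Your proposal is correct and is exactly the verification the paper has in mind (the paper in fact omits the proof of this proposition entirely, treating it as immediate): since $H(\a,\b)=H$ as a coalgebra, the comodule and colinearity data coincide, and substituting the twisted bimodule actions $h\c c=\b(h)c$, $c\c h=c\a(h)$ into the general compatibility condition of $_H\mathcal{YD}^{C}$ from Section 1.3 yields precisely the $(\a,\b)$-Yetter-Drinfel'd condition of Definition 3.1. Nothing further is needed.
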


Consider now the diagonal crossed coproduct $C(\a,\b)=H^{*op}\o H(\a,\b)$ with the comultiplication
$$\bar{\D}(p\bowtie h)=\sum_{i,j}p_1\bowtie \b(h_j)h_1S^{-1}\a(h_i)\o h^ip_2h^j\bowtie h_2,$$
for all $r\in H^{*op},h\in H$. Moreover $C(\a,\b)$ is a $\widehat{D(H)}$-bimodule coalgebra with module structures
\begin{eqnarray*}
&&\widehat{D(H)}\o H^{*op}\bowtie H(\a,\b)\rightarrow H^{*op}\o H(\a,\b),\ p\o h\o q\bowtie h'\mapsto qp\bowtie \b(h)h',\\
&&H^{*op}\bowtie H(\a,\b)\o \widehat{D(H)}\rightarrow H^{*op}\o H(\a,\b),\ q\bowtie h'\o p\o h\mapsto pq\bowtie h'\a(h).
\end{eqnarray*}

Since $H$ is finite dimensional, we have a category isomorphism $_H\mathcal{YD}^{H(\a,\b)}\cong \mathcal{M}^{H^{*op}\bowtie H(\a,\b)}$, hence $_H\mathcal{YD}^H(\a,\b)\cong \mathcal{M}^{H^{*op}\bowtie H(\a,\b)}$. The correspondence is given as follows. If $M\in\! _H\mathcal{YD}^H(\a,\b)$, then $M\in \mathcal{M}^{H^{*op}\bowtie H(\a,\b)}$ with structure
$$m_{[0]}\o m_{[1]}=\sum_{i,j}h_i\c m_{(0)}\o h^i\bowtie m_{(1)}.$$
Conversely if $M\in \mathcal{M}^{H^{*op}\bowtie H(\a,\b)}$, then $M\in\! _H\mathcal{YD}^H(\a,\b)$ with structures
\begin{eqnarray*}
&&h\c m=m_{[0]}(h\o\varepsilon)m_{[1]},\\
&&m_{(0)}\o m_{(1)}=m_{[0]}\o(\varepsilon^*\o id)m_{[1]}.
\end{eqnarray*}

\begin{proposition}
Let $H$ be a Hopf algebra and $\a,\b,\g,\d\in Aut_{Hopf}(H)$. If $M\in\! _H\mathcal{YD}^H(\a,\b)$, $N\in\! _H\mathcal{YD}^H(\g,\d)$, then $M\o N\in\! _H\mathcal{YD}^H(\d\a\d^{-1}\g,\d\b)$ with the following structures:
\begin{eqnarray*}
&&h\c(m\o n)=h_2\c m\o h_1\c n,\\
&&(m\o n)_{(0)}\o (m\o n)_{(1)}=m_{(0)}\o n_{(0)}\o\d(m_{(1)})\d\a\d^{-1}(n_{(1)}).
\end{eqnarray*}
for all $h\in H,m\in M,n\in N$.
\end{proposition}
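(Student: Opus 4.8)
The plan is to verify three things in turn: that the prescribed action makes $M\o N$ a left $H$-module, that the prescribed coaction makes it a right $H$-comodule, and that the two are compatible in the sense of Definition~3.1 with the parameters $(\d\a\d^{-1}\g,\d\b)$. The module axiom is immediate: since $\D$ is an algebra map, $(hh')_2\c m\o(hh')_1\c n=h_2h'_2\c m\o h_1h'_1\c n=h\c(h'\c(m\o n))$, and the unit acts trivially. For the comodule axioms, the counit property follows from $\v\circ\d=\v=\v\circ(\d\a\d^{-1})$ (both maps being Hopf automorphisms) together with multiplicativity of $\v$. Coassociativity follows because $\d$ and $\d\a\d^{-1}$ are Hopf algebra maps, hence commute with $\D$: writing $\r$ for the coaction of $M\o N$, both $(\r\o id)\r$ and $(id\o\D)\r$ applied to $m\o n$ reduce, using coassociativity of the coactions of $M$ and $N$, to $m_{(0)}\o n_{(0)}\o\d(m_{(1)1})\d\a\d^{-1}(n_{(1)1})\o\d(m_{(1)2})\d\a\d^{-1}(n_{(1)2})$.

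The heart of the proof is the Yetter--Drinfel'd compatibility. First I would record the conditions of $M$ and $N$ in the form in which they will be used, namely $(h_2\c m)_{(0)}\o(h_2\c m)_{(1)}\a(h_1)=h_1\c m_{(0)}\o\b(h_2)m_{(1)}$ and $(h_2\c n)_{(0)}\o(h_2\c n)_{(1)}\g(h_1)=h_1\c n_{(0)}\o\d(h_2)n_{(1)}$. I would then start from the right-hand side of the desired identity, expand the action via $h_2\c(m\o n)=h_{22}\c m\o h_{21}\c n$ and then the coaction, and use coassociativity to rewrite it as $(h_3\c m)_{(0)}\o(h_2\c n)_{(0)}\o\d\big((h_3\c m)_{(1)}\big)\,\d\a\d^{-1}\big((h_2\c n)_{(1)}\big)\,\d\a\d^{-1}\g(h_1)$.

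The decisive step is to group the automorphism-twisted tags with the comodule outputs before applying the individual conditions. Since $\d\a\d^{-1}$ is multiplicative, the last two factors combine as $\d\a\d^{-1}\big((h_2\c n)_{(1)}\g(h_1)\big)$, so the $N$-condition applies and converts the $N$-part into $h_1\c n_{(0)}$ with tag $\d\a\d^{-1}\big(\d(h_2)n_{(1)}\big)$; here I would use $\d\a\d^{-1}\d=\d\a$ to simplify the leading factor to $\d\a(h_2)$. This $\d\a(h_2)$ then combines with $\d\big((h_3\c m)_{(1)}\big)$ into $\d\big((h_3\c m)_{(1)}\a(h_2)\big)$, so the $M$-condition applies and converts the $M$-part into $h_2\c m_{(0)}$ with tag $\d\big(\b(h_3)m_{(1)}\big)$. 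Expanding $\d$ multiplicatively then yields exactly $h_2\c m_{(0)}\o h_1\c n_{(0)}\o\d\b(h_3)\d(m_{(1)})\d\a\d^{-1}(n_{(1)})$, which is precisely the expansion of the left-hand side $h_1\c(m\o n)_{(0)}\o\d\b(h_2)(m\o n)_{(1)}$.

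I expect the main obstacle to be purely organizational: keeping the Sweedler indices straight while confirming that the factors of $h$ consumed by $M$ and by $N$ are genuinely independent (which is exactly coassociativity), and recognizing the correct order of operations --- applying the $N$-condition first and the $M$-condition second, with the identity $\d\a\d^{-1}\d=\d\a$ bridging the two. These groupings are what \emph{force} the parameters to be $(\d\a\d^{-1}\g,\d\b)$; checking that no stray antipode or twist survives after the two substitutions is the only place where genuine care is required.
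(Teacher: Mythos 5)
Your proposal is correct and is essentially the paper's proof read in the opposite direction: the paper starts from $h_1\c(m\o n)_{(0)}\o\d\b(h_2)(m\o n)_{(1)}$ and applies the $M$-condition first and then the $N$-condition, whereas you start from the other side and so apply them in the reverse order, with the same key groupings $\d\a\d^{-1}\d=\d\a$ and the multiplicativity of $\d$ and $\d\a\d^{-1}$ doing the work in both cases. Your additional verification of the module and comodule axioms is routine and is simply asserted as ``clear'' in the paper.
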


\begin{proof}
Clearly $M\o N$ is a left $H$-module and right $H$-comodule. We need only to verify the compatible condition.
\begin{align*}
&h_1\c(m\o n)_{(0)}\o\d\b(h_2)(m\o n)_{(1)}\\
&=h_2\c m_{(0)}\o h_1\c n_{(0)}\o\d(\b(h_3)m_{(1)})\d\a\d^{-1}(n_{(1)})\\
&=(h_3\c m)_{(0)}\o h_1\c n_{(0)}\o\d((h_3\c m)_{(1)})\d\a\d^{-1}(\d(h_2)n_{(1)})\\
&=(h_3\c m)_{(0)}\o (h_2\c n)_{(0)}\o\d((h_3\c m)_{(1)})\d\a\d^{-1}((h_2\c n)_{(1)}\g(h_1))\\
&=(h_2\c (m\o n))_{(0)}\o(h_2\c (m\o n))_{(1)}\d\a\d^{-1}\g(h_1).
\end{align*}
The proof is completed.
\end{proof}

Note that if $M\in\! _H\mathcal{YD}^H(\a,\b)$, $N\in\! _H\mathcal{YD}^H(\g,\d)$ and $P\in\! _H\mathcal{YD}^H(\mu,\nu)$, then $(M\o N)\o P=M\o(N\o P)$ as an object in $_H\mathcal{YD}^H(\nu\d\a\d^{-1}\g\nu^{-1}\mu,\nu\d\b)$.

Denote $G=Aut_{Hopf}(H)\times Aut_{Hopf}(H)$, a group with multiplication
$$(\a,\b)*(\g,\d)=(\d\a\d^{-1}\g,\d\b).$$
The unit is $(id,id)$ and $(\a,\b)^{-1}=(\b^{-1}\a^{-1}\b,\b^{-1})$.

\begin{proposition}
Let $N\in\! _H\mathcal{YD}^H(\g,\d)$ and $(\a,\b)\in G$. Define $^{(\a,\b)}N=N$ as vector space with structures
\begin{eqnarray*}
&&h\rightharpoonup n=\a^{-1}\b(h)\c n,\\
&&n_{<0>}\o n_{<1>}=n_{(0)}\o\b^{-1}\d\a\d^{-1}(n_{(1)}).
\end{eqnarray*}
Then $^{(\a,\b)}N\in\! _H\mathcal{YD}^H(\b^{-1}\d\a\d^{-1}\g\a^{-1}\b,\b^{-1}\d\b)=\! _H\mathcal{YD}^H((\a,\b)*(\g,\d)*(\a,\b)^{-1}).$
\end{proposition}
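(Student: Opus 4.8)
The plan is to argue in three stages: first confirm that the claimed degree $(\b^{-1}\d\a\d^{-1}\g\a^{-1}\b,\b^{-1}\d\b)$ really is the conjugate $(\a,\b)*(\g,\d)*(\a,\b)^{-1}$, then check that the two prescribed structure maps make $^{(\a,\b)}N$ into a left $H$-module and a right $H$-comodule, and finally reduce the Yetter--Drinfel'd compatibility to the one already known for $N$. The degree check is a pure computation in $Aut_{Hopf}(H)$: using $(\a,\b)^{-1}=(\b^{-1}\a^{-1}\b,\b^{-1})$ together with the rule $(\a,\b)*(\g,\d)=(\d\a\d^{-1}\g,\d\b)$, I would evaluate $(\a,\b)*(\g,\d)*(\a,\b)^{-1}$ and watch the first slot collapse to $\b^{-1}\d\a\d^{-1}\g\a^{-1}\b$ and the second to $\b^{-1}\d\b$, as stated.

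For the structure maps, write $\psi=\a^{-1}\b$ and $\phi=\b^{-1}\d\a\d^{-1}$, both Hopf automorphisms, so that $h\rightharpoonup n=\psi(h)\c n$ and $n_{<0>}\o n_{<1>}=n_{(0)}\o\phi(n_{(1)})$. Since $\psi$ is an algebra map, the new action is a left $H$-action; since $\phi$ is a coalgebra map, applying it to the second leg of the original coaction preserves coassociativity and the counit, so the new coaction is a right $H$-coaction. Both verifications are routine and I would only indicate them briefly.

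The substance is the compatibility. For degree $(\b^{-1}\d\a\d^{-1}\g\a^{-1}\b,\b^{-1}\d\b)$ the condition to be proved is
$$h_1\rightharpoonup n_{<0>}\o\b^{-1}\d\b(h_2)n_{<1>}=(h_2\rightharpoonup n)_{<0>}\o(h_2\rightharpoonup n)_{<1>}\,\b^{-1}\d\a\d^{-1}\g\a^{-1}\b(h_1).$$
I would substitute $g=\psi(h)$, so that $g_1=\psi(h_1)$ and $g_2=\psi(h_2)$ because $\psi$ is a coalgebra map, expand both new structures, and then invoke the two key automorphism identities $\b^{-1}\d\b\circ\psi^{-1}=\b^{-1}\d\a=\phi\circ\d$ and $\b^{-1}\d\a\d^{-1}\g\a^{-1}\b=\phi\circ\g\circ\psi$. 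Pulling the common automorphism $\phi$ outside both tensor legs (it is an algebra map), the displayed identity becomes
$$g_1\c n_{(0)}\o\phi\big(\d(g_2)n_{(1)}\big)=(g_2\c n)_{(0)}\o\phi\big((g_2\c n)_{(1)}\g(g_1)\big),$$
and since $\phi$ is bijective one strips it off to recover exactly the $(\g,\d)$-Yetter--Drinfel'd condition for $N$, which holds by hypothesis.

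I expect the only genuine obstacle to be the automorphism bookkeeping, namely verifying cleanly that $\b^{-1}\d\b\circ\psi^{-1}=\phi\circ\d$ and that the long word $\b^{-1}\d\a\d^{-1}\g\a^{-1}\b$ factors as $\phi\circ\g\circ\psi$. Once these two identities are in hand and the substitution $g=\psi(h)$ has absorbed the module twist, the entire compatibility collapses onto the known condition for $N$, so the remainder is immediate.
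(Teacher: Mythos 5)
Your proposal is correct and follows essentially the same route as the paper: the paper's proof also verifies the compatibility by rewriting the left-hand side so that the $(\g,\d)$-Yetter--Drinfel'd condition for $N$ can be applied to the element $\a^{-1}\b(h)$, with the automorphism $\b^{-1}\d\a\d^{-1}$ factored out of the second tensor leg. Your explicit naming of $\psi=\a^{-1}\b$ and $\phi=\b^{-1}\d\a\d^{-1}$ and the two bookkeeping identities merely makes the same computation more transparent.
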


\begin{proof}
Easy to see that $^{(\a,\b)}N$ is a left $H$-module and right $H$-comodule. We check the compatible condition.
\begin{align*}
&h_1\rightharpoonup n_{<0>}\o\b^{-1}\d\b(h_2)n_{<1>}\\
&=\a^{-1}\b(h_1)\c n_{(0)}\o\b^{-1}\d\b(h_2)\b^{-1}\d\a\d^{-1}(n_{(1)})\\
&=(\a^{-1}\b(h_2)\c n)_{(0)}\o\b^{-1}\d\a\d^{-1}[(\a^{-1}\b(h_2)\c n)_{(1)}\g\d\a^{-1}\b(h_1)]\\
&=(\a^{-1}\b(h_2)\c n)_{(0)}\o\b^{-1}\d\a\d^{-1}((\a^{-1}\b(h_2)\c n)_{(1)})\b^{-1}\d\a\d^{-1}\g\a^{-1}\b(h_1)\\
&=(\a^{-1}\b(h_2)\c n)_{<0>}\o(\a^{-1}\b(h_2)\c n)_{<1>}\b^{-1}\d\a\d^{-1}\g\a^{-1}\b(h_1)\\
&=(h_2\rightharpoonup n)_{<0>}\o(h_2\rightharpoonup n)_{<1>}\b^{-1}\d\a\d^{-1}\g\a^{-1}\b(h_1).
\end{align*}
The proof is completed.
\end{proof}

\begin{remark}
Let $M\in\! _H\mathcal{YD}^H(\a,\b)$, $N\in\! _H\mathcal{YD}^H(\g,\d)$ and $(\mu,\nu)\in G$. We have
$$^{(\a,\b)*(\g,\d)}N=\! ^{(\a,\b)}(^{(\g,\d)}N)$$
as an object in $_H\mathcal{YD}^H((\a,\b)*(\mu,\nu)*(\g,\d)*(\mu,\nu)^{-1}*(\a,\b)^{-1})$. and
$$^{(\mu,\nu)}(M\o N)=\! ^{(\mu,\nu)}M\o\! ^{(\mu,\nu)}N$$
as an object in $_H\mathcal{YD}^H((\mu,\nu)*(\a,\b)*(\g,\d)*(\mu,\nu)^{-1})$.
\end{remark}

\begin{proposition}
Let $M\in\! _H\mathcal{YD}^H(\a,\b)$ and $N\in\! _H\mathcal{YD}^H(\g,\d)$. Denote $^MN=\! ^{(\a,\b)}N$ as an object in $_H\mathcal{YD}^H((\a,\b)*(\g,\d)*(\a,\b)^{-1})$. Define the map
$$c_{M,N}:M\o N\rightarrow\! ^MN\o M,\quad m\o n\mapsto \a^{-1}(m_{(1)})\c n\o m_{(0)},$$
for all $m\in M,n\in N$. Then $c_{M,N}$ is $H$-linear $H$-colinear and satisfies the relations (1.1) and (1.2).
And $c_{_{^PM,^PN}}=c_{M,N}$. Moreover $c_{M,N}$ is bijective with inverse $c^{-1}_{M,N}(n\o m)=m_{(0)}\o\a^{-1}S(m_{(1)})\c n$.
\end{proposition}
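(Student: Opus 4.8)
The plan is to verify the asserted properties one at a time, reading each structure off the explicit formulas and feeding in the $(\a,\b)$- and $(\g,\d)$-Yetter--Drinfel'd compatibilities of $M$ and $N$ together with the tensor-product structure (Proposition 3.4) and the conjugation structure (Proposition 3.5). The one preliminary identity I would record first is the ``solved'' form of the compatibility of $N$, namely
\[
(h\c n)_{(0)}\o(h\c n)_{(1)}=h_2\c n_{(0)}\o\d(h_3)n_{(1)}\g(S^{-1}(h_1)),
\]
obtained from the defining relation of $_H\mathcal{YD}^H(\g,\d)$ by introducing one extra tensorand and collapsing it through the antipode identity $S^{-1}(h_2)h_1=\v(h)1$; this is exactly the point where bijectivity of $S$ is used.

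For $H$-linearity I would compute $c_{M,N}(h\c(m\o n))=c_{M,N}(h_2\c m\o h_1\c n)$ and compare with $h\c c_{M,N}(m\o n)=h_2\rh(\a^{-1}(m_{(1)})\c n)\o h_1\c m_{(0)}$, where the action on $^MN\o M$ is $h\c(x\o m')=h_2\rh x\o h_1\c m'$ with $h\rh x=\a^{-1}\b(h)\c x$. Rewriting $h_2\rh(\a^{-1}(m_{(1)})\c n)=\a^{-1}(\b(h_2)m_{(1)})\c n$ and substituting the compatibility $h_1\c m_{(0)}\o\b(h_2)m_{(1)}=(h_2\c m)_{(0)}\o(h_2\c m)_{(1)}\a(h_1)$ of $M$ makes the two expressions agree after the cancellation $\a^{-1}\a=\mathrm{id}$.

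I expect $H$-colinearity to be the main obstacle. Here I would apply the coaction of $^MN\o M$ to $c_{M,N}(m\o n)=\a^{-1}(m_{(1)})\c n\o m_{(0)}$, whose comodule coefficient is $\b((\,\cdot\,)_{<1>})\,\d\a\d^{-1}\g\a^{-1}(m_{(1)})$ with $x_{<1>}=\b^{-1}\d\a\d^{-1}(x_{(1)})$. Since the first tensorand is $\a^{-1}(m_{(1)})\c n$, this forces me to use the solved form above to expand $(\a^{-1}(m_{(1)})\c n)_{(0)}\o(\a^{-1}(m_{(1)})\c n)_{(1)}$, and the resulting expression carries five comodule legs of $m$. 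The crux is then a telescoping: the $\b$ cancels $\b^{-1}$, the automorphism string $\d\a\d^{-1}\circ\d\a^{-1}$ collapses to $\d$, and the $S^{-1}$-factor contracts through $\sum S^{-1}(x_2)x_1=\v(x)1$, so that coassociativity reduces the five legs to the two appearing on the directly computed side $(c_{M,N}\o\mathrm{id})\rho_{M\o N}(m\o n)=\a^{-1}(m_{(1)})\c n_{(0)}\o m_{(0)}\o\d(m_{(2)})\d\a\d^{-1}(n_{(1)})$. Keeping track of the order of these legs and of the antipode contraction is the delicate part.

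The remaining items are direct. For the hexagon (1.1) I would evaluate both sides on $(m\o n)\o p$ with $P\in{}_H\mathcal{YD}^H(\mu,\nu)$ and check that the automorphism identities $(\d\a\d^{-1}\g)^{-1}\d=\g^{-1}\d\a^{-1}$ and $(\d\a\d^{-1}\g)^{-1}\d\a\d^{-1}=\g^{-1}$ reduce both to $[\g^{-1}\d\a^{-1}(m_{(1)})\,\g^{-1}(n_{(1)})]\c p\o m_{(0)}\o n_{(0)}$; for (1.2) both sides reduce, by coassociativity of the coaction of $M$, to $\a^{-1}(m_{(1)2})\c n\o\a^{-1}(m_{(1)1})\c p\o m_{(0)}$. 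For the conjugation invariance $c_{^PM,^PN}=c_{M,N}$ with $P\in{}_H\mathcal{YD}^H(\mu,\nu)$, I would substitute the structures of $^{(\mu,\nu)}M$ and $^{(\mu,\nu)}N$ from Proposition 3.5 into the braiding formula and simplify the automorphism strings: the inverse first automorphism of $^{(\mu,\nu)}M$ composed with its comodule twist gives $\nu^{-1}\mu\a^{-1}$, which the action $\mu^{-1}\nu(\,\cdot\,)$ of $^{(\mu,\nu)}N$ turns back into $\a^{-1}(m_{(1)})\c n$, recovering $c_{M,N}$. Finally, bijectivity follows by composing $c_{M,N}$ with the proposed inverse in both orders and using coassociativity together with the antipode axioms $S(m_{(1)1})m_{(1)2}=\v(m_{(1)})1$ and $m_{(1)1}S(m_{(1)2})=\v(m_{(1)})1$ and the comodule counit.
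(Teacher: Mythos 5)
Your proposal is correct and follows essentially the same route as the paper: a direct verification of $H$-linearity, $H$-colinearity, the hexagon identities, conjugation-invariance and bijectivity, reading everything off the explicit formulas and the $(\a,\b)$- and $(\g,\d)$-compatibilities together with Propositions 3.4 and 3.5. The only cosmetic difference is in the colinearity step, where the paper applies the compatibility of $N$ in its original form (the factor $\g\a^{-1}(m_{(1)1})$ is already sitting in the right place), whereas you route through the solved form with $S^{-1}$ and contract the antipode afterwards; both computations close.
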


\begin{proof}
We prove that $c_{M,N}$ is $H$-linear $H$-colinear. Indeed
\begin{align*}
c_{M,N}(h\c(m\o n))
&=c_{M,N}(h_2\c m\o h_1\c n)\\
&=\a^{-1}((h_2\c m)_{(1)}\a(h_1))\c n\o (h_2\c m)_{(0)}\\
&=\a^{-1}(\b(h_2)m_{(1)})\c n\o h_1\c m_{(0)}\\
&=h\c c_{M,N}(m\o n).
\end{align*}
And
\begin{align*}
&c_{M,N}(m\o n)_{(0)}\o c_{M,N}(m\o n)_{(1)}\\
&=(\a^{-1}(m_{(1)})\c n)_{<0>}\o m_{(0)(0)}\o\b((\a^{-1}(m_{(1)})\c n)_{<1>})\d\a\d^{-1}\g\a^{-1}(m_{(0)(1)})\\
&=(\a^{-1}(m_{(1)2})\c n)_{(0)}\o m_{(0)}\o\d\a\d^{-1}((\a^{-1}(m_{(1)2})\c n)_{(1)}\g\a^{-1}(m_{(1)1}))\\
&=\a^{-1}(m_{(1)1})\c n_{(0)}\o m_{(0)}\o\d(m_{(1)2})\d\a\d^{-1}(n_{(1)})\\
&=c_{M,N}((m\o n)_{(0)})\o(m\o n)_{(1)}.
\end{align*}
Furthermore
\begin{align*}
&(c_{M,^NP}\o id)(id\o c_{N,P})(m\o n\o p)\\
&=(c_{M,^NP}\o id)(m\o\g^{-1}(n_{(1)})\c p\o n_{(0)})\\
&=\a^{-1}(m_{(1)})\rightharpoonup(\g^{-1}(n_{(1)})\c p)\o m_{(0)}\o n_{(0)}\\
&=\g^{-1}\d\a^{-1}(m_{(1)})\g^{-1}(n_{(1)})\c p\o m_{(0)}\o n_{(0)}\\
&=\g^{-1}\d\a^{-1}\d^{-1}((m\o n)_{(1)})\c p\o (m\o n)_{(0)}\\
&=c_{M\o N,P}(m\o n\o p).
\end{align*}
Similarly we can prove (1.2). The proof is completed.
\end{proof}

Define $\widehat{\mathcal{YD}(H)}$ as the disjoint union of all $_H\mathcal{YD}^H(\a,\b)$ with $(\a,\b)\in G$.
If we endow $\widehat{\mathcal{YD}(H)}$ with monoidal structure given in Proposition 3.4,
then it becomes a strict monoidal category with the unit $k$ as an object in $_H\mathcal{YD}^H$ (with trivial structure).

 The group homomorphism $\psi:G\longrightarrow Aut(\widehat{\mathcal{YD}(H))},\ (\a,\b)\mapsto\psi_{(\a,\b)}$ is defined on components as
 \begin{align*}
 \psi_{(\a,\b)}: &\ \! _H\mathcal{YD}^H(\g,\d)\longrightarrow\! _H\mathcal{YD}^H((\a,\b)*(\g,\d)*(\a,\b)^{-1}),\\
               &\psi_{(\a,\b)}(N)=\! ^{(\a,\b)}N.
 \end{align*}
 and the functor acts on morphisms as identity. The braiding in $\widehat{\mathcal{YD}(H)}$ is given by the family $c=\{c_{M,N}\}$.
 Hence we have
\begin{proposition}
$\widehat{\mathcal{YD}(H)}$ is a braided T-category over $G$.
\end{proposition}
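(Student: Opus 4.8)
The plan is to verify, one axiom at a time, the definition of a Turaev braided $G$-category recalled in Subsection 1.1, each time invoking the structures already produced in Propositions 3.4--3.7 and the observations accompanying them. First I would record the strict monoidal structure and its $G$-grading. By construction $\widehat{\mathcal{YD}(H)}$ is the disjoint union of the subcategories $\mathcal{C}_{(\alpha,\beta)}={}_H\mathcal{YD}^H(\alpha,\beta)$; Proposition 3.4 supplies the tensor product and shows $M\otimes N\in {}_H\mathcal{YD}^H(\delta\alpha\delta^{-1}\gamma,\delta\beta)=\mathcal{C}_{(\alpha,\beta)*(\gamma,\delta)}$, which is exactly the grading requirement $U\otimes V\in\mathcal{C}_{\alpha\beta}$; the associativity observation following Proposition 3.4 gives strictness of the associator; and $k$, viewed with trivial action and coaction in ${}_H\mathcal{YD}^H(id,id)=\mathcal{C}_{(id,id)}$, is the strict unit.

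Next I would check that $\psi\colon G\to aut(\widehat{\mathcal{YD}(H)})$ is a homomorphism into the group of invertible strict tensor functors. Proposition 3.5 shows $\psi_{(\alpha,\beta)}$ carries $\mathcal{C}_{(\gamma,\delta)}$ into $\mathcal{C}_{(\alpha,\beta)*(\gamma,\delta)*(\alpha,\beta)^{-1}}$, which is the conjugation condition $\varphi_\beta(\mathcal{C}_\alpha)=\mathcal{C}_{\beta\alpha\beta^{-1}}$ transcribed to $G$. Remark 3.6 then provides multiplicativity $\psi_{(\alpha,\beta)}\psi_{(\gamma,\delta)}=\psi_{(\alpha,\beta)*(\gamma,\delta)}$ and strict tensoriality $\psi_{(\mu,\nu)}(M\otimes N)=\psi_{(\mu,\nu)}(M)\otimes\psi_{(\mu,\nu)}(N)$. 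It then suffices to note that $\psi_{(id,id)}$ is the identity functor (read off from the formulas of Proposition 3.5) and hence, by multiplicativity, that $\psi_{(\alpha,\beta)}$ is invertible with inverse $\psi_{(\alpha,\beta)^{-1}}$; preservation of the unit, $\psi_{(\alpha,\beta)}(k)=k$, is immediate, and since $\psi_{(\alpha,\beta)}$ is the identity on underlying maps it is functorial.

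Finally I would install the braiding $c=\{c_{M,N}\}$ of Proposition 3.7. That each $c_{M,N}\colon M\otimes N\to {}^MN\otimes M$ is an isomorphism satisfying the hexagon relations (1.1) and (1.2) is precisely Proposition 3.7; the compatibility with the conjugation, $\varphi_{(\alpha,\beta)}(c_{M,N})=c_{{}^{(\alpha,\beta)}M,{}^{(\alpha,\beta)}N}$, collapses---because $\psi_{(\alpha,\beta)}$ acts as the identity on morphisms---to the identity $c_{{}^PM,{}^PN}=c_{M,N}$ also recorded in Proposition 3.7. The one bullet not stated there verbatim is the crossed naturality $({}^{(\alpha,\beta)}g\otimes f)\circ c_{M,N}=c_{M',N'}\circ(f\otimes g)$ for $f\in Hom_{\mathcal{C}_{(\alpha,\beta)}}(M,M')$ and $g\in Hom_{\mathcal{C}_{(\gamma,\delta)}}(N,N')$; since ${}^{(\alpha,\beta)}g=g$ as a linear map, this is ordinary naturality, which I would verify in one line from $c_{M,N}(m\otimes n)=\alpha^{-1}(m_{(1)})\cdot n\otimes m_{(0)}$ using the $H$-colinearity of $f$ and the $H$-linearity of $g$.

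The proof is thus an assembly of the preceding results, and I expect no single heavy computation. The genuinely delicate point, and the thing to watch throughout, is keeping the $G$-grading consistent with the twisted, non-abelian law $(\alpha,\beta)*(\gamma,\delta)=(\delta\alpha\delta^{-1}\gamma,\delta\beta)$: one must confirm that the component labels produced by tensoring (Proposition 3.4), by conjugating (Proposition 3.5), and by the target ${}^MN\otimes M$ of the braiding all agree with each other and with the associativity constraint. Because the conjugation here is genuinely twisted rather than a plain action, tracking which pair of automorphisms indexes each object is where an error would most easily slip in; once these indices are seen to match---which the cited propositions guarantee---every axiom of Subsection 1.1 holds and $\widehat{\mathcal{YD}(H)}$ is a braided $T$-category over $G$.
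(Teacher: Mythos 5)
Your proposal is correct and follows essentially the same route as the paper, which states this proposition as an immediate consequence ("Hence we have") of Propositions 3.4, 3.5, 3.7 and Remark 3.6 together with the monoidal and conjugation structures described just before it. Your write-up is in fact somewhat more careful than the paper's, since you explicitly flag and sketch the crossed naturality of $c_{M,N}$ and the consistency of the $G$-labels under the twisted multiplication $(\a,\b)*(\g,\d)=(\d\a\d^{-1}\g,\d\b)$, neither of which the paper spells out.
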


It is well known that for a Hopf algebra with a bijective antipode,
 the subcategory $_H\mathcal{YD}^H_{fd}$ of all finite dimensional objects in $_H\mathcal{YD}^H$ is rigid, i.e.,
  every object has left and right dualities. For the category $\widehat{\mathcal{YD}(H)}$, we have the following result.
\begin{proposition}
Let $M\in\! _H\mathcal{YD}^H(\a,\b)$ and suppose that $M$ is finite dimensional.
Then $M^*=Hom(M,k)$ belongs to $_H\mathcal{YD}^H(\b^{-1}\a^{-1}\b,\b^{-1})$ with
\begin{eqnarray*}
&&(h\c f)(m)=f(S^{-1}(h)\c m),\\
&&f_{(0)}(m)f_{(1)}=f(m_{(0)})\b^{-1}\a^{-1}S(m_{(1)}),
\end{eqnarray*}
for all $h\in H,m\in M$ and $f\in M^*$. Then $M^*$ is a left dual of $M$. Similarly we can define the right dual $^*M=Hom(M,k)$ of $M$ with
\begin{eqnarray*}
&&(h\c f)(m)=f(S(h)\c m),\\
&&f_{(0)}(m)f_{(1)}=f(m_{(0)})\b^{-1}\a^{-1}S^{-1}(m_{(1)}).
\end{eqnarray*}
Therefore the category $\widehat{\mathcal{YD}(H)}_{fd}$, the subcategory of $\widehat{\mathcal{YD}(H)}$ consisting of finite dimensional objects, is rigid.
\end{proposition}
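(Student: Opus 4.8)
The plan is to realize $M^{*}$ as a left dual object inside $\widehat{\mathcal{YD}(H)}$, so the argument splits into three tasks: (i) verify that $M^{*}$, with the stated action and coaction, is an object of the component $_H\mathcal{YD}^H(\b^{-1}\a^{-1}\b,\b^{-1})$; (ii) verify that the ordinary evaluation $d_M\colon M^{*}\o M\to k$, $f\o m\mapsto f(m)$, and coevaluation $b_M\colon k\to M\o M^{*}$, $1\mapsto\s_i e_i\o e^i$ (for a basis $\{e_i\}$ of $M$ with dual basis $\{e^i\}$), are morphisms in $\widehat{\mathcal{YD}(H)}$; and (iii) verify the two triangle identities. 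The degree $(\b^{-1}\a^{-1}\b,\b^{-1})$ is exactly $(\a,\b)^{-1}$ in $G$, and a short computation with the tensor rule of Proposition 3.4 shows that both $M\o M^{*}$ and $M^{*}\o M$ lie in the unit component $_H\mathcal{YD}^H(id,id)$; this is what makes $d_M$ and $b_M$ candidate morphisms between objects of the same degree as the unit $k$.

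For task (i), the left $H$-module axioms for $(h\c f)(m)=f(S^{-1}(h)\c m)$ follow from $S^{-1}$ being an algebra anti-homomorphism, and the right $H$-comodule axioms follow from coassociativity of the coaction on $M$ together with the anti-coalgebra property of $S$ and the fact that $\a,\b$ are coalgebra maps; these are routine. The substantive point is the Yetter--Drinfel'd compatibility in degree $(\b^{-1}\a^{-1}\b,\b^{-1})$,
$$h_1\c f_{(0)}\o\b^{-1}(h_2)f_{(1)}=(h_2\c f)_{(0)}\o(h_2\c f)_{(1)}\,\b^{-1}\a^{-1}\b(h_1),$$
which I would prove by pairing the $M^{*}$-slot of each side against an arbitrary $m\in M$, unfolding the defining relations $(h\c f)(m)=f(S^{-1}(h)\c m)$ and $f_{(0)}(m)f_{(1)}=f(m_{(0)})\b^{-1}\a^{-1}S(m_{(1)})$, and reducing to the compatibility condition of $M$ in degree $(\a,\b)$ with the help of the anti-multiplicativity of $S$ and the relations $S\a=\a S$, $S\b=\b S$.

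For task (ii), $d_M$ is $H$-linear because $h\c(f\o m)=h_2\c f\o h_1\c m$ and $\s S^{-1}(h_2)h_1=\v(h)1$, and it is colinear since $k$ carries the trivial coaction. The delicate step is colinearity of $b_M$: using the twisted coaction of Proposition 3.4 on $M\o M^{*}$ one must check
$$\s_i e_{i(0)}\o e^i_{(0)}\o\b^{-1}(e_{i(1)})\,\b^{-1}\a\b(e^i_{(1)})=\s_i e_i\o e^i\o 1,$$
which I would obtain by writing $\r(e_i)=\s_j e_j\o t_{ji}$, computing the induced coaction on $M^{*}$, and collapsing the dual-basis sum via $\s_j t_{pj}S(t_{jq})=\v(t_{pq})1=\d_{pq}1$ once the twist $\b^{-1}\a\b$ cancels the $\b^{-1}\a^{-1}$ inside the $M^{*}$-coaction. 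Once $d_M$ and $b_M$ are morphisms, the triangle identities $(\mathrm{id}_M\o d_M)(b_M\o\mathrm{id}_M)=\mathrm{id}_M$ and $(d_M\o\mathrm{id}_{M^{*}})(\mathrm{id}_{M^{*}}\o b_M)=\mathrm{id}_{M^{*}}$ hold automatically, being the standard dual-basis snake relations at the level of vector spaces. The right dual $^{*}M$ is handled identically, with $S$ and $S^{-1}$ interchanged. I expect the main obstacle to be purely the bookkeeping of the automorphism twists---keeping straight where $\b^{-1}\a\b$ rather than $\b^{-1}\a^{-1}\b$ occurs---in the compatibility of (i) and the colinearity of (ii); the rigidity itself is then forced by finite-dimensional duality together with the unit-component placement observed above.
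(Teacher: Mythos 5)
Your proposal is correct and follows essentially the same route as the paper: verify the $(\b^{-1}\a^{-1}\b,\b^{-1})$-compatibility of $M^*$ by evaluating against $m\in M$, check that the dual-basis coevaluation $b_M$ and evaluation $d_M$ are $H$-linear and $H$-colinear (with the colinearity of $b_M$ collapsing via the antipode identity exactly as you describe, since $\b^{-1}\a\b\circ\b^{-1}\a^{-1}S=\b^{-1}S$ matches the $\b^{-1}$ on the $M$-factor), and finish with the standard snake identities. Your explicit remark that $M\o M^{*}$ and $M^{*}\o M$ land in the unit component of $G$ is a point the paper leaves implicit but is indeed needed for $b_M$ and $d_M$ to be morphisms.
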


\begin{proof}
First of all, $M^*$ is an object in $_H\mathcal{YD}^H(\b^{-1}\a^{-1}\b,\b^{-1})$. Indeed, obviously $M^*$ is a left $H$-module and right $H$-comodule. And
\begin{align*}
&(h_2\c f)_{(0)}(m)(h_2\c f)_{(1)}\b^{-1}\a^{-1}\b(h_1)\\
&=(h_2\c f)(m_{(0)})S(m_{(1)})\b^{-1}\a^{-1}\b(h_1)\\
&=f(S^{-1}(h_2)\c m_{(0)})\b^{-1}\a^{-1}S(m_{(1)})\b^{-1}\a^{-1}\b(h_1)\\
&=f(S^{-1}(h_2)\c m_{(0)})S(\b^{-1}\a^{-1}(\b S^{-1}(h_1)m_{(1)}))\\
&=f((S^{-1}(h_1)\c m)_{(0)})S(\b^{-1}\a^{-1}((S^{-1}(h_1)\c m)_{(1)})\b^{-1}S^{-1}(h_2))\\
&=f((S^{-1}(h_1)\c m)_{(0)})\b^{-1}(h_2)S(\b^{-1}\a^{-1}((S^{-1}(h_1)\c m)_{(1)}))\\
&=f_{(0)}(S^{-1}(h_1)\c m)\b^{-1}(h_2)f_{(1)}\\
&=(h_1\c f_{(0)})(m)\b^{-1}(h_2)f_{(1)},
\end{align*}
as required. Define maps
\begin{eqnarray*}
&&b_M:k\rightarrow M\o M^*,\quad 1\mapsto\sum_i m_i\o m^i,\\
&&d_M:M^*\o M\rightarrow k,\quad f\o m\mapsto f(m),
\end{eqnarray*}
where $\{m_i\}$ and $\{m^i\}$ are basis and dual basis of $M$.
We need to prove that $b_M$ and $d_M$ are $H$-linear. We compute
\begin{align*}
(h\c b_M(1))(m)&=(h\c\sum_i m_i\o m^i)(m)\\
               &=(\sum_i h_2\c m_i\o h_1\c m^i)(m)\\
               &=\sum_i h_2\c m_i m^i(S^{-1}(h_1)\c m)\\
               &=h_2S^{-1}(h_1)\c m\\
               &=\varepsilon(h)b_M(1)(m),
\end{align*}
and
\begin{align*}
d_M(h\c (f\o m))&=d_M(h_2\c f\o h_1\c m)\\
                &=(h_2\c f)(h_1\c m)\\
                &=f(S^{-1}(h_2)h_1\c m)\\
                &=\varepsilon(h)f(m)\\
                &=h\c d_M(f\o m).
\end{align*}
They are also $H$-colinear. Indeed,
\begin{align*}
b_M(1)_{(0)}(m)\o b_M(1)_{(1)}&=\sum_i m_{i(0)}m^i_{(0)}(m)\o\b^{-1}(m_{i(1)})\b^{-1}\a\b(m^i_{(1)})\\
                              &=\sum_i m_{i(0)}m^i(m_{(0)})\o\b^{-1}(m_{i(1)})\b^{-1}(S(m_{(1)}))\\
                              &=m_{(0)}\o\b^{-1}(m_{(1)1})S(m_{(1)2})\\
                              &=b_M(1)(m)\o1,
\end{align*}
and
\begin{align*}
d_M((f\o m)_{(0)})\o (f\o m)_{(1)}&=d_M(f_{(0)}\o m_{(0)})\o\b(f_{(1)})\a^{-1}(m_{(1)})\\
                                  &=f_{(0)}(m_{(0)})\b(f_{(1)})\a^{-1}(m_{(1)})\\
                                  &=f(m_{(0)})\a^{-1}(S(m_{(1)1})m_{(1)2})\\
                                  &=d_M(f\o m)_{(0)}\o d_M(f\o m)_{(1)}.
\end{align*}
It is straightforward to verify that

$(id_M\o d_M)(b_M\o id_M)=id_M$ and $(d_M\o id_{M^*})(id_{M^*}\o b_M)=id_{M^*}$.

Similarly we can prove that $^*M$ is a right dual of $M$. The proof is completed.
\end{proof}

Now we are in a position to construct a coquasitriangular Turaev group algebra over $G$, denoted by $CT(H)$ such that the $T$-category $Corep(CT(H))$ of corepresentation of $CT(H)$ is isomorphic to $\widehat{\mathcal{YD}(H)}$ as braided $T$-category.

For $(\a,\b)\in G$, the $(\a,\b)$-component $CT(H)_{\a,\b}$ will be the diagonal crossed coproduct $H^{*op}\bowtie H(\a,\b)$. Define multiplication by
\begin{align}
m_{(\a,\b),(\g,\d)}:&H^{*op}\bowtie H(\a,\b)\o H^{*op}\bowtie H(\g,\d)\longrightarrow H^{*op}\bowtie H((\a,\b)*(\g,\d)),\nonumber\\
&(p\bowtie h)\o(q\bowtie h')\mapsto qp\bowtie \d(h)\d\a\d^{-1}(h').
\end{align}
Then we have the following result.

\begin{proposition}
$CT(H)$ becomes a Turaev $G$-algebra under the diagonal crossed coproduct and multiplication (3.1). The antipode is given by
\begin{eqnarray*}
&&S_{(\a,\b)}:H^{*op}\bowtie H(\a,\b)\longrightarrow H^{*op}\bowtie H((\a,\b)^{-1}),\\
&&p\bowtie h\mapsto \sum_{i,j}h^iS^{-1*}(p)S^{-1*}(h^j)\bowtie\b^{-1}(h_j)\b^{-1}\a^{-1}S(h_1)\b^{-1}\a^{-1}\b(h_i).
\end{eqnarray*}

\end{proposition}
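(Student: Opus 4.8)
The plan is to verify in turn the four ingredients of a Turaev $G$-algebra: the unit, associativity of the multiplication (3.1), compatibility of the coalgebra structure with product and unit, and finally the antipode axiom. The coalgebra structure on each component $CT(H)_{(\a,\b)}=H^{*op}\bowtie H(\a,\b)$ is already supplied by Proposition 2.1, so only the algebra-theoretic and bialgebra-theoretic compatibilities remain.

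First I would take the unit to be $\eta(1)=\v\bowtie 1$, with $\v$ the counit of $H$ (the unit of $H^{*op}$) and $1\in H$. The unit axioms are immediate: specializing (3.1) at $(\g,\d)=(id,id)$ sends $(p\bowtie h)\o(\v\bowtie 1)$ to $p\bowtie h$, and specializing at $(\a,\b)=(id,id)$ sends $(\v\bowtie 1)\o(q\bowtie h')$ to $q\bowtie h'$, using $(id,id)*(\g,\d)=(\g,\d)=(\a,\b)*(id,id)$. Associativity is the bookkeeping heart of the $G$-algebra structure, and I expect it to go through cleanly once all automorphisms are tracked. Applying (3.1) twice to $(p\bowtie h)\o(q\bowtie h')\o(r\bowtie h'')$, the left composite $m_{(\a,\b)*(\g,\d),(\mu,\nu)}(m_{(\a,\b),(\g,\d)}\o id)$ yields $rqp\bowtie \nu\d(h)\,\nu\d\a\d^{-1}(h')\,\nu\d\a\d^{-1}\g\nu^{-1}(h'')$, where the last automorphism arises because the first component of $(\a,\b)*(\g,\d)$ is $\d\a\d^{-1}\g$, conjugated by $\nu$. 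The right composite $m_{(\a,\b),(\g,\d)*(\mu,\nu)}(id\o m_{(\g,\d),(\mu,\nu)})$ produces the identical expression after substituting $\d''=\nu\d$ and $\d''\a(\d'')^{-1}=\nu\d\a\d^{-1}\nu^{-1}$ into the outer product; the two bracketings agree precisely because $G$ is associative under $*$, and the $H^{*op}$-legs reverse-multiply to $rqp$ on both sides automatically.

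Next I would check that $\eta$ and each $m_{(\a,\b),(\g,\d)}$ are coalgebra maps. For $\eta$ this says $\v\bowtie 1$ is grouplike for $\bar{\D}$ and $\bar{\v}(\v\bowtie 1)=1$; the counit condition is clear from $\bar{\v}(p\bowtie c)=p(1)\v(c)$, and grouplikeness holds because $\v\bowtie 1$ is the unit of the Drinfel'd codouble $\wh{D(H)}=CT(H)_{(id,id)}$ (Remark 2.2), hence grouplike. For the product maps I would exploit Proposition 2.3: the multiplication (3.1) is assembled from exactly the twisted left/right actions that make $H^{*op}\bowtie H(\a,\b)$ a $\wh{D(H)}$-bimodule coalgebra, so the coalgebra-morphism property of (3.1) follows by combining those two module-coalgebra compatibilities while transporting the automorphisms $\d$ and $\d\a\d^{-1}$ through $\bar{\D}$.

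The antipode axiom is where I expect the real work to lie. One must establish $m_{(\a,\b),(\a,\b)^{-1}}(id\o S_{(\a,\b)})\bar{\D}(p\bowtie h)=\bar{\v}(p\bowtie h)(\v\bowtie 1)$ together with its left-handed counterpart $m_{(\a,\b)^{-1},(\a,\b)}(S_{(\a,\b)}\o id)\bar{\D}=\bar{\v}\,\eta$. Substituting the coproduct formula and then the explicit $S_{(\a,\b)}$ introduces \emph{two} independent dual-basis summations, one from $\bar{\D}$ and one from $S_{(\a,\b)}$, interleaved with the three noncommuting automorphisms $\b^{-1}$, $\b^{-1}\a^{-1}$, $\b^{-1}\a^{-1}\b$ carried by the antipode. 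The strategy is to first carry out the $H^{*op}$-side multiplication, using the completeness relations for $\{h_i\},\{h^i\}$ to collapse one summation against the convolution with $S^{-1*}(p)$, and then to reduce the $H$-leg by repeated use of the defining identity $h_1S(h_2)=\v(h)1$ in $H$ transported through those automorphisms. The main obstacle is precisely the simultaneous telescoping of both dual-basis sums while respecting the noncommuting automorphisms; once they collapse, both sides reduce to $p(1)\v(h)\,(\v\bowtie 1)=\bar{\v}(p\bowtie h)(\v\bowtie 1)$, establishing the antipode property and completing the proof that $CT(H)$ is a Turaev $G$-algebra.
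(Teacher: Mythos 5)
Your overall plan coincides with the paper's: direct verification of associativity of (3.1), of the coalgebra-map property of the multiplication, and of the antipode identities, with unit $\v\bowtie 1$. Your associativity computation is correct and matches the paper's, both composites giving $rqp\bowtie\nu\d(h)\,\nu\d\a\d^{-1}(h')\,\nu\d\a\d^{-1}\g\nu^{-1}(h'')$. However, there are two places where your argument as written does not yet close.

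First, the proposed shortcut for showing that $m_{(\a,\b),(\g,\d)}$ is a coalgebra map does not go through as stated. Proposition 2.3 gives the action $(p\o h)\tr(q\bowtie h')=qp\bowtie \d(h)h'$, which preserves the component $H^{*op}\bowtie H(\g,\d)$, whereas the multiplication (3.1) sends $(p\bowtie h)\o(q\bowtie h')$ to $qp\bowtie \d(h)\d\a\d^{-1}(h')$ \emph{inside the different component} $H^{*op}\bowtie H(\d\a\d^{-1}\g,\d\b)$, whose coproduct $\bar{\D}$ is built from the actions of $H(\d\a\d^{-1}\g,\d\b)$, not of $H(\g,\d)$. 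The extra twist $\d\a\d^{-1}$ on the second leg and the change of target coalgebra mean the bimodule-coalgebra compatibility of Proposition 2.3 cannot simply be "combined"; you would have to prove a separate compatibility between the automorphism twist and $\bar{\D}$, which is essentially the direct computation the paper carries out. Second, the antipode axiom is where almost all of the content of this proposition lives, and you only describe a strategy. The paper's verification is a genuine multi-step computation: after substituting $\bar{\D}$ and $S_{(\a,\b)}$ one must collapse the nested dual-basis sums via the identity $\sum_{j,s}h^jh^s\o f(h_j,h_s)=\sum_j h^j\o f(h_{j1},h_{j2})$ and then telescope $S(h_{j1})h_{j2}=\v(h_j)1$ and $S(h_1)h_2=\v(h)1$ through the automorphisms $\b^{-1},\a^{-1}$; one must also confirm that the displayed $S_{(\a,\b)}$ really lands in $H^{*op}\bowtie H(\b^{-1}\a^{-1}\b,\b^{-1})$ and check \emph{both} convolution identities (the paper computes $m(S\o id)\bar{\D}$ and asserts the other is similar). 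As it stands your text asserts that "once they collapse, both sides reduce to" the counit, which is precisely the claim to be proved; without executing at least one of these telescopings the proof is incomplete.
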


\begin{proof}
The multiplication is associative. For all $f\bowtie h\in H^{*op}\bowtie H(\a,\b),p\bowtie h'\in H^{*op}\bowtie H(\g,\d),q\bowtie h''\in H^{*op}\bowtie H(\mu,\nu)$, we compute
\begin{align*}
[(f\bowtie h)(p\bowtie h')](q\bowtie h'')&=(pf\bowtie \d(h)\d\a\d^{-1}(h'))(q\bowtie h'')\\
                                         &=qpf\bowtie\nu\d(h)\nu\d\a\d^{-1}(h')\nu\d\a\d^{-1}\g\nu^{-1}(h'')\\
                                         &=(f\bowtie h)(qp\bowtie\nu(h')\nu\g\nu^{-1}(h''))\\
                                         &=(f\bowtie h)[(p\bowtie h')(q\bowtie h'')],
\end{align*}
as claimed. Next we prove that $m_{(\a,\b),(\g,\d)}$ is a coalgebra map. Indeed,
\begin{align*}
&m_{(\a,\b),(\g,\d)}((p\bowtie h)_1\o(q\bowtie h')_1)\o m_{(\a,\b),(\g,\d)}((p\bowtie h)_2\o(q\bowtie h')_2)\\
&=\sum_{i,j,s,t} m_{(\a,\b),(\g,\d)}(p_1\bowtie\b(h_j)h_1\a S^{-1}(h_i)\o q_1\bowtie \d(h_s)h'_1\g S^{-1}(h_t))\\
&\o m_{(\a,\b),(\g,\d)}(h^ip_2h^j\bowtie h_2\o h^tq_2h^s\bowtie h'_2)\\
&=\sum_{i,j,s,t} q_1p_1\bowtie\d\b(h_j)\d(h_1)\d\a S^{-1}(h_i)\d\a(h_s)\d\a\d^{-1}(h'_1)\d\a\d^{-1}\g S^{-1}(h_t)\\
&\o h^tq_2h^sh^ip_2h^j\bowtie \d(h_2)\d\a\d^{-1}(h'_2)\\
&=\sum_{j,t} q_1p_1\bowtie\d\b(h_j)\d(h_1)\d\a\d^{-1}(h'_1)\d\a\d^{-1}\g S^{-1}(h_t)\o h^tq_2p_2h^j\bowtie \d(h_2)\d\a\d^{-1}(h'_2)\\
&=(qp\bowtie \d(h)\d\a\d^{-1}(h'))_1\o (qp\bowtie \d(h)\d\a\d^{-1}(h'))_2\\
&=m_{(\a,\b),(\g,\d)}(p\bowtie h\o q\bowtie h')_1\o m_{(\a,\b),(\g,\d)}(p\bowtie h\o q\bowtie h')_2,
\end{align*}
as required. Easy to see that $(\varepsilon\bowtie 1)_1\o(\varepsilon\bowtie 1)_2=\varepsilon\bowtie 1\o\varepsilon\bowtie 1$.

We now check that $S$ is the antipode of $CT(H)$.
\begin{align*}
&S_{(\a,\b)}((p\bowtie h)_1)(p\bowtie h)_2\\
&=\sum_{i,j}S_{(\a,\b)}(p_1\bowtie\b(h_j)h_1\a S^{-1}(h_i))(h^ip_2h^j\bowtie h_2)\\
&=\sum_{i,j,s,t} (h^sS^{-1*}(p_1)S^{-1*}(h^t)\bowtie\b^{-1}(h_t h_i)\b^{-1}\a^{-1}S(h_1)\b^{-1}\a^{-1}\b S(h_j)\b^{-1}\a^{-1}\b(h_s))(h^ip_2h^j\bowtie h_2)\\
&=\sum_{i,j,s,t}h^ip_2h^jh^sS^{-1*}(p_1)S^{-1*}(h^t)\bowtie h_th_i\a^{-1}S(h_1)\a^{-1}\b (S(h_j)h_s)\a^{-1}(h_2)\\
&=\sum_{i,j,t}h^ip_2h^jS^{-1*}(p_1)S^{-1*}(h^t)\bowtie h_th_i\a^{-1}S(h_1)\a^{-1}\b (S(h_{i1})h_{i2})\a^{-1}(h_2)\\
&=\sum_{i,j,t}h^ip_2S^{-1*}(p_1)S^{-1*}(h^t)\bowtie h_th_i\a^{-1}S(h_1)\a^{-1}(h_2)\\
&=p(1)\varepsilon(h)\varepsilon\bowtie1.
\end{align*}
Thus $S_{(\a,\b)}*id_{(\a,\b)}=\varepsilon_{(\a,\b)}\varepsilon\bowtie1$. Similarly one can verify that $id_{(\a,\b)}*S_{(\a,\b)}=\varepsilon_{(\a,\b)}\varepsilon\bowtie1$. $S$ is the antipode of $CT(H)$. The proof is completed.
\end{proof}

\begin{proposition}
Moreover $CT(H)$ is a crossed Turaev $G$-algebra with the crossing $\psi$ given by
\begin{align*}
\psi_{(\a,\b)}:&H^{*op}\bowtie H(\g,\d)\longrightarrow H^{*op}\bowtie H((\a,\b)*(\g,\d)*(\a,\b)^{-1}),\\
&p\bowtie h\mapsto p\circ\a^{-1}\b\bowtie\b^{-1}\d\a\d^{-1}(h).
\end{align*}
\end{proposition}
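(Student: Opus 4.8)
The plan is to verify, in turn, that each $\psi_{(\a,\b)}$ is a coalgebra isomorphism and that the family $\psi=\{\psi_{(\a,\b)}\}$ satisfies the four crossing axioms (i)--(iv) of Subsection 1.2; the underlying Turaev $G$-algebra structure (and the antipode) is already supplied by Proposition 3.10. Throughout I abbreviate $\th=\b^{-1}\d\a\d^{-1}$ and record the target index $(\g',\d'):=(\a,\b)*(\g,\d)*(\a,\b)^{-1}=(\th\g\a^{-1}\b,\ \b^{-1}\d\b)$, so that $\psi_{(\a,\b)}$ sends $p\bowtie h\in H^{*op}\bowtie H(\g,\d)$ to $p\circ\a^{-1}\b\bowtie\th(h)\in H^{*op}\bowtie H(\g',\d')$.

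First I would clear away the routine items. Bijectivity of $\psi_{(\a,\b)}$ is immediate since $p\mapsto p\circ\a^{-1}\b$ and $h\mapsto\th(h)$ are both bijective; preservation of the counit follows from $\bar\varepsilon(\psi_{(\a,\b)}(p\bowtie h))=p(\a^{-1}\b(1))\varepsilon(\th(h))=p(1)\varepsilon(h)$. Axiom (iii) amounts to $\psi_{(\a,\b)}(\varepsilon\bowtie 1)=\varepsilon\bowtie 1$, which holds because $\varepsilon\circ\a^{-1}\b=\varepsilon$ and $\th(1)=1$. Axioms (i) and (ii) --- multiplicativity $\psi_{(\a,\b)}\psi_{(\mu,\nu)}=\psi_{(\a,\b)*(\mu,\nu)}$ and compatibility of $\psi_{(\mu,\nu)}$ with the multiplication $(3.1)$ --- are pure bookkeeping with the group law $(\a,\b)*(\g,\d)=(\d\a\d^{-1}\g,\d\b)$: on the $H^{*op}$-leg one checks that the two resulting automorphisms precomposing $p$ coincide, and on the $H$-leg that the two composite automorphisms applied to $h$ coincide, using nothing beyond associativity of $*$ and the explicit form of $m_{(\a,\b),(\g,\d)}$.

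The heart of the matter is that $\psi_{(\a,\b)}$ preserves the comultiplication, i.e. $\bar\D\circ\psi_{(\a,\b)}=(\psi_{(\a,\b)}\o\psi_{(\a,\b)})\circ\bar\D$. Expanding the left side with the coproduct of $H^{*op}\bowtie H(\g',\d')$ and using $(p\circ\a^{-1}\b)_1\o(p\circ\a^{-1}\b)_2=(p_1\circ\a^{-1}\b)\o(p_2\circ\a^{-1}\b)$ together with $\th(h)_1\o\th(h)_2=\th(h_1)\o\th(h_2)$ gives $\s_{i,j}(p_1\circ\a^{-1}\b)\bowtie\d'(h_j)\th(h_1)S^{-1}\g'(h_i)\o h^i(p_2\circ\a^{-1}\b)h^j\bowtie\th(h_2)$. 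Expanding the right side with the coproduct of $H^{*op}\bowtie H(\g,\d)$ and then applying $\psi_{(\a,\b)}$ leg-by-leg gives instead a sum over a fresh pair of indices $s,t$ in which the first leg carries $\th\d(h_t)\th(h_1)\th S^{-1}\g(h_s)$ while the second leg carries $(h^sp_2h^t)\circ\a^{-1}\b$. To reconcile the two I would use two facts: precomposition with a Hopf automorphism is an algebra endomorphism of $H^{*op}$, so $(h^sp_2h^t)\circ\a^{-1}\b=(h^s\circ\a^{-1}\b)(p_2\circ\a^{-1}\b)(h^t\circ\a^{-1}\b)$; and the canonical basis identity $\s_i(h^i\circ\phi)\o h_i=\s_i h^i\o\phi(h_i)$, valid for every automorphism $\phi$ (together with its transpose). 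Applying the latter to the $s$- and $t$-summations transports the precomposition by $\a^{-1}\b$ off the dual legs $h^s,h^t$ and onto the $H$-legs, turning the relevant factors into $\th S^{-1}\g\a^{-1}\b(h_s)$ and $\th\d\a^{-1}\b(h_t)$; the automorphism identities $\th\d\a^{-1}\b=\b^{-1}\d\b=\d'$ and $\th S^{-1}\g\a^{-1}\b=S^{-1}(\th\g\a^{-1}\b)=S^{-1}\g'$ then convert these exactly into the left-hand expression after renaming $s,t$ to $i,j$.

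Finally, axiom (iv), the compatibility $\psi_{(\mu,\nu)}\circ S_{(\a,\b)}=S_{(\mu,\nu)*(\a,\b)*(\mu,\nu)^{-1}}\circ\psi_{(\mu,\nu)}$, I would settle by substituting the antipode formula of Proposition 3.10 and the definition of $\psi$ into both sides; the same two tools --- distributivity of precomposition over products in $H^{*op}$ (now also through the transpose $S^{-1*}$, which commutes with precomposition by a Hopf automorphism) and the basis identity above --- reduce both sides to the same element, the residual work being automorphism bookkeeping of the kind already met in (i). I expect the comultiplication step of the previous paragraph to be the main obstacle: it is the only place where the index $(\g',\d')$ must be produced intrinsically from $(\g,\d,\a,\b)$, and getting the precompositions to migrate correctly between the $H^{*op}$- and $H$-factors via the basis identity is precisely where a naive term-by-term comparison fails.
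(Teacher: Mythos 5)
Your proposal is correct and follows essentially the same route as the paper: the bulk of the work is the comultiplicativity of $\psi_{(\a,\b)}$, handled exactly as you describe by migrating the precomposition $\circ\,\a^{-1}\b$ between the $H^{*op}$- and $H$-legs via the identity $\sum_i(h^i\circ\phi)\o h_i=\sum_i h^i\o\phi(h_i)$ and then matching indices through $\th\d\a^{-1}\b=\b^{-1}\d\b$ and $\th\g S^{-1}\a^{-1}\b=S^{-1}\g'$, with (i)--(iv) reduced to the same automorphism bookkeeping the paper carries out. No gaps; writing out the displayed computations would reproduce the paper's proof.
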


\begin{proof}
First of all $\psi_{(\a,\b)}$ is bijective and for all $p\in H^*,h\in H$,
\begin{align*}
&\psi_{(\a,\b)}(p\bowtie h)_1\o\psi_{(\a,\b)}(p\bowtie h)_2\\
&=(p\circ\a^{-1}\b\bowtie\b^{-1}\d\a\d^{-1}(h))_1\o(p\circ\a^{-1}\b\bowtie\b^{-1}\d\a\d^{-1}(h))_2\\
&=\sum_{i,j}p_1\circ\a^{-1}\b\bowtie\b^{-1}\d\b(h_j)\b^{-1}\d\a\d^{-1}(h_1)\b^{-1}\d\a\d^{-1}\g\a^{-1}\b S^{-1}(h_i)\o h^i(p_2\circ\a^{-1}\b)h^j\bowtie\b^{-1}\d\a\d^{-1}(h_2)\\
&=\sum_{i,j}p_1\circ\a^{-1}\b\bowtie\b^{-1}\d\a(h_j)\b^{-1}\d\a\d^{-1}(h_1)\b^{-1}\d\a\d^{-1}\g S^{-1}(h_i)\o (h^ip_2h^j)\circ\a^{-1}\b\bowtie\b^{-1}\d\a\d^{-1}(h_2)\\
&=\sum_{i,j}\psi_{(\a,\b)}(p_1\bowtie\d(h_j)h_1\g S^{-1}(h_i))\o\psi_{(\a,\b)}(h^ip_2h^j\bowtie h_2)\\
&=\psi_{(\a,\b)}((p\bowtie h)_1)\o\psi_{(\a,\b)}((p\bowtie h)_2).
\end{align*}
Thus $\psi_{(\a,\b)}$ is a coalgebra isomorphism. And

\begin{itemize}
  \item [(i)] $\psi$ is multiplicative, since for $h\in H(\mu,\nu)$
\begin{align*}
\psi_{(\a,\b)}\psi_{(\g,\d)}(p\bowtie h)&=\psi_{(\a,\b)}(p\circ\g^{-1}\d\bowtie\d^{-1}\nu\g\nu^{-1}(h))\\
                                        &=p\circ\g^{-1}\d\a^{-1}\b\bowtie\b^{-1}\d^{-1}\nu\d\a\d^{-1}\g\nu^{-1}(h)\\
                                        &=\psi_{(\d\a\d^{-1}\g,\d\b)}(p\bowtie h)\\
                                        &=\psi_{(\a,\b)*(\g,\d)}(p\bowtie h).
\end{align*}
Obviously $\psi_{(1,1)}(CT(\a,\b))=id_{(\a,\b)}$.
  \item [(ii)] For $p,q\in H^*$ and $h\in H(\g,\d),h'\in H(\mu,\nu)$,
\begin{align*}
&\psi_{(\a,\b)}(p\bowtie h)\psi_{(\a,\b)}(q\bowtie h')\\
&=(p\circ\a^{-1}\b\bowtie\b^{-1}\d\a\d^{-1}(h))(q\circ\a^{-1}\b\bowtie\b^{-1}\nu\a\nu^{-1}(h'))\\
&=qp\circ\a^{-1}\b\bowtie\b^{-1}\nu\d\a\d^{-1}(h)\b^{-1}\nu\d\a\d^{-1}\g\nu^{-1}(h')\\
&=qp\circ\a^{-1}\b\bowtie\b^{-1}\nu\d\a\d^{-1}\nu^{-1}(\nu(h)\nu\g\nu^{-1}(h'))\\
&=\psi_{(\a,\b)}(qp\bowtie\nu(h)\nu\g\nu^{-1}(h'))\\
&=\psi_{(\a,\b)}((p\bowtie h)(q\bowtie h')).
\end{align*}
  \item [(iii)] $\psi_{(\a,\b)}(\varepsilon\bowtie 1)=\varepsilon\bowtie 1$.
  \item [(iv)] \begin{align*}
&\psi_{(\a,\b)}S_{(\g,\d)}(p\bowtie h)\\
&=\sum_{i,j}\psi_{(\a,\b)}(h^iS^{-1*}(p)S^{-1*}(h^j)\bowtie\d^{-1}(h_j)\d^{-1}\g^{-1}(S(h))\d^{-1}\g^{-1}\d(h_i))\\
&=\sum_{i,j}(h^iS^{-1*}(p)S^{-1*}(h^j))\circ\a^{-1}\b\bowtie\b^{-1}\d^{-1}\a\d(\d^{-1}(h_j)\d^{-1}\g^{-1}(S(h))\d^{-1}\g^{-1}\d(h_i))\\
&=\sum_{i,j}(h^iS^{-1*}(p)S^{-1*}(h^j))\circ\a^{-1}\b\bowtie\b^{-1}\d^{-1}\a(h_j\g^{-1}(S(h))\g^{-1}\d(h_i))\\
&=\sum_{i,j}h^iS^{-1*}(p\circ\a^{-1}\b)S^{-1*}(h^j)\bowtie\b^{-1}\d^{-1}\b(h_j)\b^{-1}\d^{-1}\a\g^{-1}S(h)\b^{-1}\d^{-1}\a\g^{-1}\d\a^{-1}\b(h_i)\\
&=S_{(\a,\b)*(\g,\d)*(\a,\b)^{-1}}(p\circ\a^{-1}\b\bowtie\b^{-1}\d\a\d^{-1}(h))\\
&=S_{(\a,\b)*(\g,\d)*(\a,\b)^{-1}}\psi_{(\a,\b)}(p\bowtie h).
\end{align*}
\end{itemize}
The proof is completed.
\end{proof}

\begin{proposition}
$CT(H)$ is coquasitriangular with the structure
$$\sigma_{(\a,\b),(\g,\d)}(p\bowtie h,q\bowtie h')=p(\d^{-1}(h'))q(1)\varepsilon(h).$$
\end{proposition}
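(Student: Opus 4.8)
The plan is to verify the four axioms (TCT1)--(TCT4) together with convolution invertibility of each $\si_{(\a,\b),(\g,\d)}$, working throughout from the comultiplication $\bar\D$ of Proposition~2.1 and the multiplication (3.1) of $CT(H)$. For invertibility I would exhibit the inverse
$$\bar\si_{(\a,\b),(\g,\d)}(p\bowtie h,q\bowtie h')=p(\d^{-1}S(h'))\,q(1)\,\v(h)$$
and check $\si*\bar\si=\bar\v\o\bar\v=\bar\si*\si$ in $\mathrm{Hom}(CT(H)_{(\a,\b)}\o CT(H)_{(\g,\d)},k)$. Expanding $\si((p\bowtie h)_1,(q\bowtie h')_1)\,\bar\si((p\bowtie h)_2,(q\bowtie h')_2)$ with $\bar\D$, the factors $\v(\cdot)$ annihilate the $H$-part of each first tensorand and the evaluations at $1$ kill the superfluous $H^{*op}$-factors, while the dual-basis sums collapse through $\s_i\langle h^i,1\rangle h_i=1$ and $\s_i\v(h_i)h^i=\v$; what remains is $p(\d^{-1}(h'_1S(h'_2)))\,q(1)\,\v(h)=p(1)q(1)\v(h)\v(h')$, which is exactly $\bar\v(p\bowtie h)\bar\v(q\bowtie h')$ by the antipode axiom. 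The opposite composite is handled the same way.

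The three scalar axioms run on the same computational engine, supplemented by $\v\circ\p=\v$ and $\p(1)=1$ for $\p\in Aut_{Hopf}(H)$ and by the fact that evaluation at $1$ is an algebra map, so that $(h^iq_2h^j)(1)=\langle h^i,1\rangle q_2(1)\langle h^j,1\rangle$. Axiom (TCT4) is the cleanest: writing $x\in CT(H)_{(\a,\b)}$, $y\in CT(H)_{(\g,\d)}$ and conjugating by $(\mu,\nu)$, the second component of $(\mu,\nu)*(\g,\d)*(\mu,\nu)^{-1}$ is $\nu^{-1}\d\nu$, so the automorphisms telescope as $(\nu^{-1}\d\nu)^{-1}\nu^{-1}\d\mu\d^{-1}=\nu^{-1}\mu\d^{-1}$ and $\mu^{-1}\nu\c\nu^{-1}\mu\d^{-1}=\d^{-1}$, returning $p\circ\d^{-1}$; the factors $q\circ\mu^{-1}\nu$ at $1$ and $\v$ on the twisted $H$-part are unchanged, recovering $\si_{(\a,\b),(\g,\d)}(x,y)$. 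For (TCT1) and (TCT2) I would expand $xy$ (resp.\ $yz$) by (3.1) and $z$ (resp.\ $x$) by $\bar\D$; in (TCT1) the evaluations $\langle h^i,1\rangle,\langle h^j,1\rangle$ force the $h_i,h_j$ sitting inside the surviving functional to $1$, while in (TCT2) the counits $\v(h_i),\v(h_j)$ force $h^i,h^j$ to $\v$, and in both cases the coproduct of the one surviving functional distributes precisely as the two evaluations demand, so the two sides coincide.

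The genuinely laborious step, and the one I expect to be the main obstacle, is (TCT3), because it is an identity between \emph{elements} of $CT(H)_{(\a,\b)*(\g,\d)}$ rather than a scalar identity. Here one must compute the two products $y_2\,\psi_{(\g,\d)^{-1}}(x_2)$ and $x_1y_1$ explicitly in the twisted component, tracking two independent dual bases $\{h_i\},\{h^i\}$ and $\{h_s\},\{h^s\}$, the crossing applied to $x_2$, and the antipode buried in $\bar\D$. The strategy is to use the scalar $\si(x_1,y_1)$ (respectively $\si(x_2,y_2)$) to freeze the $H^{*op}$-part of one tensorand and the $\d^{-1}$-shifted $H$-part of the other, then collapse the four basis sums one at a time with the resolution-of-identity and counit identities above, making sure the conjugating automorphisms introduced by $\psi_{(\g,\d)^{-1}}$ on the left match those produced by the multiplication on the right. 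Once these reductions are carried out both sides should reduce to the same element of $CT(H)_{(\a,\b)*(\g,\d)}$, completing the proof that $CT(H)$ is coquasitriangular.
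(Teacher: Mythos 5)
Your approach is the same as the paper's: a direct verification of (TCT1)--(TCT4) from the comultiplication $\bar\D$ of Proposition~2.1 and the multiplication (3.1), and the collapsing mechanisms you invoke ($\sum_i\langle h^i,1\rangle h_i=1$, $\sum_i\v(h_i)h^i=\v$, and the telescoping of automorphisms in (TCT4)) are exactly the ones the paper's computations run on. Your convolution inverse $\bar\si_{(\a,\b),(\g,\d)}(p\bowtie h,q\bowtie h')=p(\d^{-1}S(h'))q(1)\v(h)$ is correct --- the composite does reduce to $p(\d^{-1}(h'_1S(h'_2)))q(1)\v(h)=\bar\v(p\bowtie h)\bar\v(q\bowtie h')$ --- and this is a check the paper's proof omits entirely even though the definition requires it, so including it is a genuine improvement.

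The gap is exactly where you locate it: (TCT3) is left as a strategy rather than a computation. Since it is the only axiom asserting an equality of \emph{elements} of $CT(H)_{(\a,\b)*(\g,\d)}$ (the other three are scalar identities that follow almost mechanically from the counit and evaluation-at-$1$ reductions), it carries essentially all of the content of the proposition, and ``both sides should reduce to the same element'' is not yet a proof. For the record, your plan does succeed: with $x=f\bowtie h\in CT(H)_{(\a,\b)}$ and $y=p\bowtie h'\in CT(H)_{(\g,\d)}$, both $\si(x_1,y_1)\,y_2\,\psi_{(\g,\d)^{-1}}(x_2)$ and $x_1y_1\,\si(x_2,y_2)$ collapse to $f_2(\d^{-1}(h'_1))\,pf_1\bowtie\d\b\d^{-1}(h'_2)\d(h)$. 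On the left, after the four dual-basis sums are absorbed, the two extra legs of $f$ produced by coassociativity pair through $S^{-1}(u_2)u_1=\v(u)1$ and disappear; on the right, the surviving factor $\d\a\d^{-1}(S^{-1}(h'_2))\,\d\a\d^{-1}(h'_1)$ contracts to a counit. You need to write out this reduction explicitly (tracking the crossing $\psi_{(\g,\d)^{-1}}$ applied to $x_2$ and the two independent dual bases) to complete the argument.
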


\begin{proof}
For all $f,p,q\in H^*,h\in H(\a,\b),h'\in H(\g,\d),h''\in H(\mu,\nu)$,\\
For (TCT1):
\begin{align*}
&\sigma_{(\a,\b)*(\g,\d),(\mu,\nu)}((f\bowtie h)(p\bowtie h'),(q\bowtie h''))\\
&=\sigma_{(\a,\b)*(\g,\d),(\mu,\nu)}(pf\bowtie\d(h)\d\a\d^{-1}(h'),(q\bowtie h''))\\
&=pf(\nu^{-1}(h''))q(1)\varepsilon(hh')\\
&=p(\nu^{-1}(h''_1))f(\nu^{-1}(h''_2))q(1)\varepsilon(hh'),
\end{align*}
and
\begin{align*}
&\sigma_{(\a,\b),(\mu,\nu)}(f\bowtie h,(q\bowtie h'')_2)\sigma_{(\g,\d),(\mu,\nu)}(p\bowtie h',(q\bowtie h'')_1)\\
&=\sum_{i,j}\sigma_{(\a,\b),(\mu,\nu)}(f\bowtie h,h^iq_2h^j\bowtie h''_2)\sigma_{(\g,\d),(\mu,\nu)}(p\bowtie h',q_1\bowtie \nu(h_j)h''_1\mu S^{-1}(h_i))\\
&=\sum_{i,j}f(\nu^{-1}(h''_2))h^j(1)q_2(1)h^i(1)\varepsilon(h)p(h_j\nu^{-1}(h''_1)\nu^{-1}\mu S^{-1}(h_i))\\
&=f(\nu^{-1}(h''_2))p(\nu^{-1}(h''_1))\varepsilon(hh')q(1).
\end{align*}
For (TCT2):
\begin{align*}
&\sigma_{(\a,\b),(\g,\d)*(\mu,\nu)}(f\bowtie h,(p\bowtie h')(q\bowtie h''))\\
&=\sigma_{(\a,\b),(\g,\d)*(\mu,\nu)}(f\bowtie h,qp\bowtie \nu(h')\nu\g\nu^{-1}(h''))\\
&=f(\d^{-1}(h'\g\nu^{-1}(h'')))qp(1)\varepsilon(h),
\end{align*}
and
\begin{align*}
&\sigma_{(\a,\b),(\g,\d)}((f\bowtie h)_1,p\bowtie h')\sigma_{(\g,\d)^{-1}*(\a,\b)*(\g,\d),(\mu,\nu)}(\psi_{(\g,\d)^{-1}}((f\bowtie h)_2),q\bowtie h'')\\
&=\sum_{i,j}\sigma_{(\a,\b),(\g,\d)}(f_1\bowtie \b(h_j)h_1\a S^{-1}(h_i),p\bowtie h')\\
&\quad\sigma_{(\g,\d)^{-1}*(\a,\b)*(\g,\d),(\mu,\nu)}(\psi_{(\g,\d)^{-1}}(h^if_2h^j\bowtie h_2),q\bowtie h'')\\
&=f_1(\d^{-1}(h'))p(1)\sigma_{(\g,\d)^{-1}*(\a,\b)*(\g,\d),(\mu,\nu)}(f_2\circ\d^{-1}\g\bowtie\d\b\d^{-1}\g^{-1}\d\b^{-1}(h_2),q\bowtie h'')\\
&=f_1(\d^{-1}(h'))qp(1)f_2(\d^{-1}\g\nu^{-1}(h''))\varepsilon(h)\\
&=f(\d^{-1}(h')\d^{-1}\g\nu^{-1}(h''))qp(1)\varepsilon(h).
\end{align*}
For (TCT3):
\begin{align*}
&\sigma_{(\a,\b),(\g,\d)}((f\bowtie h)_1,(p\bowtie h')_1)(p\bowtie h')_2\psi_{(\g,\d)^{-1}}((f\bowtie h)_2)\\
&=\sum_{i,j,s,t}\sigma_{(\a,\b),(\g,\d)}(f_1\bowtie \b(h_j)h_1\a S^{-1}(h_i),p_1\bowtie \d(h_s)h'_1\g S^{-1}(h_t))(h^tp_2h^s\bowtie h'_2)\psi_{(\g,\d)^{-1}}(h^if_2h^j\bowtie h_2)\\
&=\sum_{s,t}f_1(h_s\d^{-1}(h'_1)\d^{-1}\g S^{-1}(h_t))p_1(1)(h^tp_2h^s\bowtie h'_2)\psi_{(\g,\d)^{-1}}(f_2\bowtie h)\\
&=\sum_{s,t}f_1(h_s\d^{-1}(h'_1)\d^{-1}\g S^{-1}(h_t))p_1(1)(h^tp_2h^s\bowtie h'_2)(f_2\circ\d^{-1}\g\bowtie\d\b\d^{-1}\g^{-1}\d\b^{-1}(h))\\
&=\sum_{s,t}f_1(h_s\d^{-1}(h'_1)\d^{-1}\g S^{-1}(h_t))(f_2\circ\d^{-1}\g)h^tph^s\bowtie \d\b\d^{-1}(h'_2)\d(h)\\
&=\sum_{s,t}f_2(\d^{-1}(h'_1))(f_4\circ\d^{-1}\g)(f_3\circ\d^{-1}\g S^{-1})pf_1\bowtie \d\b\d^{-1}(h'_2)\d(h)\\
&=f_2(\d^{-1}(h'_1))pf_1\bowtie \d\b\d^{-1}(h'_2)\d(h),
\end{align*}
and
\begin{align*}
&(f\bowtie h)_1(p\bowtie h')_1\sigma_{(\a,\b),(\g,\d)}((f\bowtie h)_2,(p\bowtie h')_2)\\
&=\sum_{i,j,s,t}(f_1\bowtie \b(h_j)h_1\a S^{-1}(h_i))(p_1\bowtie \d(h_s)h'_1\g S^{-1}(h_t))\sigma_{(\a,\b),(\g,\d)}(h^if_2h^j\bowtie h_2,h^tp_2h^s\bowtie h'_2)\\
&=\sum_{i,j,s,t}p_1f_1\bowtie\d\b(h_j)\d(h_1)\d\a S^{-1}(h_i)\d\a(h_s)\d\a\d^{-1}(h'_1)\d\a\d^{-1}\g S^{-1}(h_t)h^if_2h^j(\d^{-1}(h'_2))\varepsilon(h_2)h^tp_2h^s(1)\\
&=pf_1\bowtie\d\b\d^{-1}(h'_4)\d(h)\d\a\d^{-1}S^{-1}(h'_2)\d\a\d^{-1}(h'_1)f_2(\d^{-1}(h'_3))\\
&=pf_1\bowtie\d\b\d^{-1}(h'_2)\d(h)f_2(\d^{-1}(h'_1)).
\end{align*}
For (TCT4):
\begin{align*}
&\sigma_{(\a,\b)*(\g,\d)*(\a,\b)^{-1},(\a,\b)*(\mu,\nu)*(\a,\b)^{-1}}(\psi_{(\a,\b)}(p\bowtie h'),\psi_{(\a,\b)}(q\bowtie h''))\\
&=\sigma_{(\a,\b)*(\g,\d)*(\a,\b)^{-1},(\a,\b)*(\mu,\nu)*(\a,\b)^{-1}}(p\circ\a^{-1}\b\bowtie\b^{-1}\d\a\d^{-1}(h'), q\circ\a^{-1}\b\bowtie \b^{-1}\nu\a\nu^{-1}(h''))\\
&=p(\nu^{-1}(h''))q(1)\varepsilon(h')\\
&=\sigma_{(\g,\d),(\mu,\nu)}(p\bowtie h',q\bowtie h'').
\end{align*}
The proof is completed.
\end{proof}

By the arguments after Proposition 3.3 we obtain the main result:
\begin{theorem}
$Corep(CT(H))$ and $\widehat{\mathcal{YD}(H)}$ are isomorphic as braided $T$-category over $G$.
\end{theorem}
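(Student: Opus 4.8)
The plan is to assemble the structural results of this section into a single isomorphism of braided $T$-categories, building everything on the component-wise identification of Proposition 3.3. Since $H$ is finite dimensional, that proposition gives for each $(\a,\b)\in G$ a category isomorphism $_H\mathcal{YD}^H(\a,\b)\cong\mathcal{M}^{H^{*op}\bowtie H(\a,\b)}=\mathcal{M}^{CT(H)_{(\a,\b)}}$, under which a map is $H$-linear and $H$-colinear exactly when it is $CT(H)_{(\a,\b)}$-colinear. As both $\widehat{\mathcal{YD}(H)}$ and $Corep(CT(H))$ are by definition the disjoint unions over $(\a,\b)\in G$ of these component categories, the identifications glue to a functor $F$ that is bijective on objects and fully faithful. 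It then remains to check that $F$ is strict monoidal, intertwines the two conjugation (crossing) functors, and carries the braiding of one category to that of the other.

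For the monoidal part, I would first compute the $CT(H)$-coaction on a tensor product. Writing the comodule structure of Proposition 3.3 as $m_{[0]}\o m_{[1]}=\sum_i h_i\c m_{(0)}\o h^i\bowtie m_{(1)}$, the coaction of $M\o N$ in $Corep(CT(H))$ is $m_{[0]}\o n_{[0]}\o m_{[1]}n_{[1]}$, the product being taken by the multiplication (3.1). Since $(h^i\bowtie m_{(1)})(h^k\bowtie n_{(1)})=h^kh^i\bowtie\d(m_{(1)})\d\a\d^{-1}(n_{(1)})$, evaluating the $H^{*op}$-factor at $1$ and using the contraction $\sum_i(h_i\c m_{(0)})h^i(1)=m_{(0)}$ collapses this to $m_{(0)}\o n_{(0)}\o\d(m_{(1)})\d\a\d^{-1}(n_{(1)})$, which is exactly the coaction of Proposition 3.4; the module structure $h\c(m\o n)=h_2\c m\o h_1\c n$ and the unit object are recovered the same way through $h\c m=m_{[0]}(h\o\v)m_{[1]}$. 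Thus the multiplication (3.1) was designed precisely so that $F$ is strict monoidal, and this step is essentially bookkeeping.

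Next I would match the crossings. Transporting $N\in\mathcal{M}^{CT(H)_{(\g,\d)}}$ through the crossing $\psi_{(\a,\b)}$ of Proposition 3.8 replaces its coaction by $(id\o\psi_{(\a,\b)})$ composed with it, i.e. it twists the $H^{*op}$-functional by $\circ\,\a^{-1}\b$ and the $H(\g,\d)$-part by $\b^{-1}\d\a\d^{-1}$. Decoding these twists through the dictionary of Proposition 3.3 — the functional twist feeds through $h\c m=m_{[0]}(h\o\v)m_{[1]}$ to give the action $h\rh n=\a^{-1}\b(h)\c n$, while the coalgebra twist feeds through the comodule projection $(\v^*\o id)$ to give $n_{<1>}=\b^{-1}\d\a\d^{-1}(n_{(1)})$ — reproduces exactly the conjugate object $^{(\a,\b)}N$ of Proposition 3.5. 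Hence $F\circ\psi_{(\a,\b)}^{CT}=\psi_{(\a,\b)}^{\mathcal{YD}}\circ F$.

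The main obstacle is the braiding. The coquasitriangular form $\sigma$ of Proposition 3.9 equips $Corep(CT(H))$ with a braiding $c^{CT}_{M,N}\colon M\o N\to\,^{(\a,\b)}N\o M$ of the usual $\sigma$-twisted type, and I would expand it via Proposition 3.3: the dual-basis elements $h^i,h^k$ appearing in $m_{[1]},n_{[1]}$ are contracted against the slots of $\sigma_{(\a,\b),(\g,\d)}(p\bowtie h,q\bowtie h')=p(\d^{-1}(h'))q(1)\v(h)$, and the identity $\sum_i(h_i\c x)h^i(y)=y\c x$ converts these scalar pairings into genuine $H$-actions on the $M$- and $N$-components. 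The delicate point is that the target of the braiding is the conjugate comodule $^{(\a,\b)}N$, so the crossing $\psi$ of Proposition 3.8 must be inserted into the $\sigma$-braiding before decoding; a naive application of $\sigma$ alone yields the mirror map or a shifted automorphism, and only after this insertion do the four automorphisms $\a,\b,\g,\d$ combine correctly, collapsing $c^{CT}_{M,N}$ to precisely $c_{M,N}(m\o n)=\a^{-1}(m_{(1)})\c n\o m_{(0)}$ of Proposition 3.4. Verifying that this twist is exactly the right one is where the real work lies; once it is done, the braided $T$-category axioms transported along $F$ hold because they already hold for $c_{M,N}$ by Proposition 3.4 and Proposition 3.6, and the theorem follows.
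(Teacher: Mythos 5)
Your proposal takes essentially the same route as the paper, whose entire proof of this theorem is the single line ``By the arguments after Proposition 3.3 we obtain the main result'': namely, glue the componentwise isomorphisms $_H\mathcal{YD}^H(\a,\b)\cong\mathcal{M}^{H^{*op}\bowtie H(\a,\b)}$ into a functor and observe that the multiplication (3.1), the crossing of Proposition 3.8 and the form $\sigma$ of Proposition 3.9 were built precisely to transport the tensor product, conjugation and braiding of Propositions 3.4--3.6. Your sketch is in fact more detailed than the paper's own argument; the one step you defer (checking that the $\sigma$-induced braiding, with the crossing inserted, collapses to $c_{M,N}(m\o n)=\a^{-1}(m_{(1)})\c n\o m_{(0)}$) is likewise not carried out in the paper.
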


\begin{example}
Let $\pi$ be a group, then we have a group algebra $k(\pi)$. It is well known that the group $Aut_{Hopf}(k(\pi))$ of Hopf automorphisms of $k(\pi)$ is equal to the group $Aut(\pi)$ of automorphisms of $\pi$. Let $\a,\b\in Aut(\pi)$. An $(\a,\b)$-Yetter-Drinfel'd module is a left $\pi$-module $M$ with a decomposition $M=\bigoplus_{a\in\pi}M_a$, where $M_a=\{m\in  M|m_{(0)}\o m_{(1)}=m\o a\}$.

If $\a,\b,\g,\d\in Aut(\pi)$, $M\in\! _{k(\pi)}\mathcal{YD}^{k(\pi)}(\a,\b)$ and $N\in\! _{k(\pi)}\mathcal{YD}^{k(\pi)}(\g,\d)$, then $M\o N\in\! _{k(\pi)}\mathcal{YD}^{k(\pi)}(\d\a\d^{-1}\g,\d\b)$ with action
$a\c(m\o n)=a\c m\o a\c n$ for all $a\in \pi,m\in M,n\in N$, and decomposition $M\o N=\bigoplus_{c\in\pi}(\bigoplus_{ab=c}M_{\d^{-1}(a)}\o N_{\d\a^{-1}\d^{-1}(h)})$.

If $\a,\b\in Aut(\pi)$ and $N\in\! _{k(\pi)}\mathcal{YD}^{k(\pi)}(\g,\d)$, then $^{(\a,\b)}N=N$ as vector space with action $a\rightharpoonup n=\a^{-1}\b(a)\c n$ for all $a\in \pi,n\in N$, and decomposition $^{(\a,\b)}N=\bigoplus_{a\in\pi}N_{\d\a^{-1}\d^{-1}\b(a)}$.

With the above notations, the braiding $c_{M,N}:M\o N\rightarrow\! ^{M}N\o M$ acts on homogeneous elements $m\in M_a,n\in N_b$ as $c_{M,N}(m\o n)=\a^{-1}(a)\c n\o m_{(0)}$. Therefore $M_\a\o N_\b$ is sent to $N_{\d\a^{-1}(a)b\g\a^{-1}(a^{-1})}\o M_a$.

Now assume that $M\in\! _{k(\pi)}\mathcal{YD}^{k(\pi)}(\a,\b)$ is finite dimensional. Since $S=S^{-1}$ for $k(\pi)$, we have $M^*=\! ^*M$, and for all $a\in\pi,m\in M, f\in M^*$, $(a\c f)(m)=f(a^{-1}\c m)$ with decomposition $M^*=\bigoplus_{a\in\pi}(M_{\b^{-1}\a^{-1}(a)})^*$.

Let $\pi$ be a finite group and $\{p_a\}_{a\in\pi}$ the dual of $k(\pi)$. For $\a,\b\in Aut(\pi)$, the component $CT(k(\pi))(\a,\b)=k(\pi)^{*op}\bowtie k(\pi)$ with comultiplication
$$\bar{\Delta}(p_c\bowtie d)=\sum_{ab=c}p_a\bowtie\b(b)d\a(b^{-1})\o p_b\bowtie d,$$
for all $c,d\in\pi$. Furthermore for $a\in k(\pi)(\a,\b)$ and $b\in k(\pi)(\g,\d)$,
\begin{eqnarray*}
&&(p_c\bowtie a)(p_d\bowtie b)=\delta_{c,d}p_c\bowtie\d(a)\d\a\d^{-1}(b),\\
&&1_{CT(k(\pi))(id,id)}=\sum_{a\in\pi}p_a\o 1,\\
&&\psi_{(\a,\b)}(p_c\bowtie d)=p_{\b^{-1}\a(c)}\o\b^{-1}\d\a\d^{-1}(d),\\
&&S_{(\a,\b)}(p_c\bowtie a)=p_{c^{-1}}\bowtie\b^{-1}(c)\b^{-1}\a^{-1}(a^{-1})\b^{-1}\a^{-1}\b(c^{-1}),\\
&&\sigma_{(\a,\b),(\g,\d)}((p_c\bowtie a),(p_d\bowtie b))=\delta_{b,\d(c)}\delta_{1,d}.
\end{eqnarray*}
\end{example}

\section*{Acknowledgement}

This work was supported by the NSF of China (No. 11371088, No. 10871042, No.11571173) and the fundamental research funds for the central universities(No. KYLX15$\_$0109).

\end{document}